\newtheorem{thm}[equation]{Theorem}
\numberwithin{equation}{section}
\newtheorem{cor}[equation]{Corollary}
\newtheorem{lem}[equation]{Lemma}
\newtheorem{conj}[equation]{Conjecture}
\newtheorem{prop}[equation]{Proposition}
\newtheorem{fig}[equation]{Figure}
\begin{document}
\raggedbottom \voffset=-.7truein \hoffset=0truein \vsize=8truein
\hsize=6truein \textheight=8truein \textwidth=6truein
\baselineskip=18truept
\def\vareps{\varepsilon}
\def\mapright#1{\ \smash{\mathop{\longrightarrow}\limits^{#1}}\ }
\def\mapleft#1{\smash{\mathop{\longleftarrow}\limits^{#1}}}
\def\mapup#1{\Big\uparrow\rlap{$\vcenter {\hbox {$#1$}}$}}
\def\mapdown#1{\Big\downarrow\rlap{$\vcenter {\hbox {$\ssize{#1}$}}$}}
\def\on{\operatorname}
\def\spa{\on{span}}
\def\a{\alpha}
\def\bz{{\Bbb Z}}
\def\gd{\on{gd}}
\def\imm{\on{imm}}
\def\sq{\on{Sq}}
\def\ssp{\on{stablespan}}
\def\eps{\epsilon}
\def\br{{\Bbb R}}
\def\bc{{\Bbb C}}
\def\bh{{\Bbb H}}
\def\tfrac{\textstyle\frac}
\def\w{\wedge}
\def\b{\beta}
\def\A{{\cal A}}
\def\P{{\cal P}}
\def\zt{{\Bbb Z}_2}
\def\bq{{\Bbb Q}}
\def\ker{\on{ker}}
\def\coker{\on{coker}}
\def\u{{\cal U}}
\def\e{{\cal E}}
\def\exp{\on{exp}}
\def\wbar{{\overline w}}
\def\xbar{{\overline x}}
\def\ybar{{\overline y}}
\def\zbar{{\overline z}}
\def\ebar{{\overline e}}
\def\nbar{{\overline n}}
\def\mbar{{\overline m}}
\def\ubar{{\overline u}}
\def\et{{\widetilde E}}
\def\pt{{\widetilde P}}
\def\rt{{\widetilde R}}
\def\ni{\noindent}
\def\coef{\on{coef}}
\def\den{\on{den}}
\def\gd{{\on{gd}}}
\def\N{{\Bbb N}}
\def\Z{{\Bbb Z}}
\def\Q{{\Bbb Q}}
\def\R{{\Bbb R}}
\def\C{{\Bbb C}}
\def\Ah{\widehat{A}}
\def\Bh{\widehat{B}}
\def\Ch{\widehat{C}}
\def\Bin{\on{Bin}}
\def\xmin{x_{\text{min}}}
\def\xmax{x_{\text{max}}}
\title[Maximizing a combinatorial expression]
{Maximizing a combinatorial expression arising from crowd estimation}

\author{Donald M. Davis}
\address{Department of Mathematics, Lehigh University\\Bethlehem, PA 18015, USA}
\email{dmd1@lehigh.edu}
\date{October 29, 2009}

\keywords{Binomial coefficients, size of union}
\thanks {2000 {\it Mathematics Subject Classification}:
05A10,60C05.}

\maketitle
\begin{abstract} We determine, within 1, the value of $N$ for which $\sum\limits_i\binom{s_1}i\binom{s_2}N\binom{s_1}{N-i}\binom Ni$
achieves its maximum value. Here $s_1$ and $s_2$ are fixed integers. This problem arises in studying the most likely value of $|A\cup B\cup C|$ if $A$ and $C$ are
disjoint sets of cardinality $s_1$, and $|B|=s_2$. Attempting to remove the 1 unit of indeterminacy
leads to interesting conjectures about a family of rational functions.
 \end{abstract}

\section{Introduction}\label{intro}
The question considered here arises from problems involving estimating sizes of crowds.
You count sizes of certain subsets and want to estimate the size of the union.

The case considered here involves three sets $A$, $B$, and $C$ with the property that $A\cap C=\emptyset$, but
$B$ may intersect  the other sets. Suppose also that $|A|=s_1$, $|B|=s_2$, and $|C|=s_3$. What is the most likely value for $|A\cup B\cup C|$? This question was
suggested to the author by Fred Cohen, along with the mathematical model which we now present.

The assumption being made
is that any choice of $i$ people in $A\cap B$ and $j$ other people in $B\cap C$ is equally likely, for all
$i$ and $j$. For example, it is equally likely that $|A\cap B|=1$ with that person a specified person of $A$ and a specified person
of $B$ or that $|A\cap B|=3$ with those people a specified subset of $A$ and a specified subset of $B$. This can be formulated
in the following way.

Suppose $\Ah$, $\Bh$, and $\Ch$ are disjoint sets with $|\Ah|=s_1$, $|\Bh|=s_2$, and $|\Ch|=s_3$. The sample space consists of all 4-tuples
$$(A_1,B_1,B_2,C_2)\subset(\Ah,\Bh,\Bh,\Ch)$$ such that $|A_1|=|B_1|$, $|B_2|=|C_2|$, and $B_1\cap B_2=\emptyset$. Thus $A_1$ and $B_1$ correspond
to $A\cap B$ in the earlier formulation, and we have
$$|A\cup B\cup C|=s_1+s_2+s_3-|B_1|-|B_2|.$$
We assume that each element of the
sample space is equally likely. Let $E_{i,j}$ denote the event that $|B_1|=i$ and $|B_2|=j$. Then
$$|E_{i,j}|=\tbinom {s_1}i\tbinom{s_2}i\tbinom{s_2-i}j\tbinom{s_3}j.$$
If $E_N$ is the event that $|B_1|+|B_2|=N$, i.e., that $|A\cup B\cup C|=s_1+s_2+s_3-N$, then
$$|E_N|=\sum_{i+j=N}|E_{i,j}|.$$
Hence the most likely value of $|A\cup B\cup C|$ is $s_1+s_2+s_3-N$, where $N$ maximizes
$$\sum_i\tbinom{s_1}i\tbinom{s_2}i\tbinom{s_2-i}{N-i}\tbinom{s_3}{N-i}=\sum_i\tbinom{s_1}i\tbinom{s_2}N\tbinom{s_3}{N-i}\tbinom Ni.$$

We focus attention primarily on the case in which $s_1=s_3$. In this case we obtain in Corollary \ref{maincor} a simple formula for the maximizing $N$ within 1,
and in \ref{rfnmt2} a much-less-tractable formula which removes the indeterminacy. In Section \ref{sec3}, we attempt to obtain a more useful approximation to \ref{rfnmt2}, and in doing so we notice fascinating patterns in
a family of rational functions, but can only conjecture that these patterns persist. See Table \ref{oddd} and Conjecture
\ref{oddconj}. In Section \ref{genlcase},
we consider the general case when $s_1$ and $s_3$ need not be equal. Our results there are somewhat similar, but not so complete.

Our main theorem is
\begin{thm} \label{mainthm} Let $f_{s_1,s_2}(N):=\binom{s_2}N\sum\limits_i\binom{s_1}i\binom{s_1}{N-i}\binom Ni$ for integer values of $N$.
For each $s_1$ and $s_2$, there is an integer, which we denote by $g(s_1,s_2)$, such that $f_{s_1,s_2}(N)$ is an increasing function of $N$ for $N\le g(s_1,s_2)$, and a decreasing function
of $N$ for $N\ge g(s_1,s_2)$. Moreover,
\begin{equation}\label{1}g(s_1,s_2)= \biggl[2s_1+s_2+\tfrac32-\sqrt{4s_1^2+4s_1+(s_2+\tfrac12)^2}\biggr]+\delta\end{equation}
with $\delta=0$ or $1$.
\end{thm}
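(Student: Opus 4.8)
The plan is to first show that $f_{s_1,s_2}$ is log-concave as a function of $N$ --- this produces the integer $g(s_1,s_2)$ together with the ``increasing then decreasing'' shape --- and then to pin down its peak by estimating a ratio. Write $f_{s_1,s_2}(N)=\binom{s_2}N\,T(N)$ with $T(N)=\sum_i\binom{s_1}i\binom{s_1}{N-i}\binom Ni$. The crucial fact is that the exponential generating function of $T$ is a squared Laguerre polynomial: since $\binom Ni\frac{x^N}{N!}=\frac{x^i}{i!}\cdot\frac{x^{N-i}}{(N-i)!}$, the Cauchy product gives
\[\sum_{N\ge 0}T(N)\frac{x^N}{N!}=\biggl(\sum_{i\ge 0}\binom{s_1}i\frac{x^i}{i!}\biggr)^{\!2}=L_{s_1}(-x)^2,\]
where $L_{s_1}$ is the degree-$s_1$ Laguerre polynomial. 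Because $L_{s_1}$ has $s_1$ distinct positive real roots, $L_{s_1}(-x)^2$ is a real-rooted polynomial with positive coefficients, so by Newton's inequalities its coefficient sequence $\ell_N:=T(N)/N!$ is strictly log-concave. Now $f_{s_1,s_2}(N)=\binom{s_2}N N!\,\ell_N$, and $\binom{s_2}N N!=s_2(s_2-1)\cdots(s_2-N+1)$ has successive ratios $s_2-N+1$, strictly decreasing, hence is itself strictly log-concave on $\{0,\dots,s_2\}$. A product of positive log-concave sequences is log-concave, so $f_{s_1,s_2}$ is log-concave on its support $\{0,\dots,\min(s_2,2s_1)\}$; as it vanishes outside this range and equals $1$ at $N=0$, it is increasing then decreasing, which defines $g(s_1,s_2)$.

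For the peak location, note that $f_{s_1,s_2}(N+1)/f_{s_1,s_2}(N)=(s_2-N)\,\ell_{N+1}/\ell_N$, which is decreasing in $N$ by log-concavity; so $g$ is controlled by where this ratio crosses $1$. To handle $\ell_{N+1}/\ell_N$, factor $L_{s_1}(-x)=\frac1{s_1!}\prod_{m=1}^{s_1}(x+y_m)$ with $y_1,\dots,y_{s_1}>0$ the Laguerre roots, so that $\ell_N=\frac1{(s_1!)^2}e_{2s_1-N}(Y)$, where $Y$ is the $2s_1$-element multiset containing each $y_m$ twice. The low-order symmetric functions of $Y$ are explicit: the standard formulas for the elementary symmetric functions of Laguerre roots give $\sum Y=2s_1^2$ and $\sum Y^2=2s_1^2(2s_1-1)$. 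Newton's inequalities for the $e_j(Y)$ then provide two-sided bounds for $e_{2s_1-N-1}(Y)/e_{2s_1-N}(Y)=\ell_{N+1}/\ell_N$, and feeding these into $(s_2-N)\ell_{N+1}/\ell_N=1$ reduces the problem to locating the smaller root of a quadratic. One checks that this quadratic is $N^2-(4s_1+2s_2+3)N+(4s_1s_2+2s_1+2s_2+2)$, whose smaller root is exactly $2s_1+s_2+\tfrac32-\sqrt{4s_1^2+4s_1+(s_2+\tfrac12)^2}$. Since the Newton bounds are not sharp, one only locates $g(s_1,s_2)$ to within $1$ of this root, i.e.\ it equals the integer part of the root plus some $\delta\in\{0,1\}$, which is \eqref{1}.

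The main obstacle is exactly the estimate in the second step. Used crudely, the available symmetric-function inequalities give a range for $\ell_{N+1}/\ell_N$ that is too wide to localize the crossover within a single integer; one must combine them with the exact first two power sums of $Y$ and argue carefully in the two boundary regimes $N\approx 2s_1$ (where $\ell_N$ is small) and $N\approx s_2$ (where $s_2-N$ is small), as well as dispose of a handful of small cases of $s_1$ and $s_2$ by hand. Arranging all the $\tfrac12$-shifts so that the comparison quadratic comes out exactly as above is the delicate part, and the residual slack in the inequalities is presumably also why the ambiguity $\delta$ cannot be removed by this method (its removal being the subject of the paper's later, ``much-less-tractable'' formula).
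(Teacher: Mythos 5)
Your first step is correct and genuinely different from the paper's route to unimodality. The identity $\sum_N T(N)x^N/N!=\bigl(\sum_i\binom{s_1}i x^i/i!\bigr)^2=L_{s_1}(-x)^2$ is right, the real-rootedness of the Laguerre polynomial does make $\ell_N=T(N)/N!$ log-concave via Newton's inequalities, and multiplying by the log-concave falling factorial $(s_2)_N$ gives unimodality of $f_{s_1,s_2}$ directly. The paper does not argue this way: there, unimodality is extracted from the monotonicity in $d$ of the threshold in (\ref{feq}), which in turn rests on the bound $-\tfrac12<R_d(x)/P_d(x)<0$ of Lemma \ref{hardlem}; so your observation, which also identifies $P_d(s_1)$ as $e_d$ of the doubled Laguerre roots, is a cleaner and self-contained proof of the existence of $g(s_1,s_2)$.

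The second step, however, has a genuine gap, and it is exactly where the paper's real work lies. You assert that Newton's inequalities together with the power sums $\sum Y=2s_1^2$ and $\sum Y^2=2s_1^2(2s_1-1)$ yield two-sided bounds on $e_{2s_1-N-1}(Y)/e_{2s_1-N}(Y)$ tight enough to locate the crossover of $(s_2-N)\ell_{N+1}/\ell_N$ past $1$ within a single integer, and you write down the target quadratic (whose smaller root is indeed the expression in (\ref{1})). But you do not exhibit these bounds, and you concede in your final paragraph that the crude versions are too wide; knowing only the first two power sums of a $2s_1$-element multiset does not in general pin down a middle ratio $e_{j-1}/e_j$ to the precision needed here, so "argue carefully in the two boundary regimes" is a restatement of the difficulty rather than a proof. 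The paper closes precisely this gap by a different mechanism: it computes the \emph{exact} quotient of $P_{d+1}(x)$ by $P_d(x)$, namely $\tfrac2{d+1}x^2-\tfrac{2d}{d+1}x+\tfrac d2$ (Lemma \ref{Plem}, via the expansion (\ref{3}) of the top three coefficients), which turns the crossover condition into your quadratic with an additive error of exactly $R_d(x)/P_d(x)$, and then proves $-\tfrac12<R_d(x)/P_d(x)<0$ for $d\ge5$, $x>d/2$ (Lemma \ref{hardlem}) by an explicit term-by-term cancellation in $\rt_{2b+\eps}(x+b)$ using the vanishing of $\sum_{i=0}^{b-1}\binom{2b+1}i(2(b-i)^2+b-2i)-b\binom{2b+1}b$ and the monotonicity of the double products. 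Nothing in your proposal substitutes for that estimate, so as written the localization of $g(s_1,s_2)$ to within $1$ of the stated root is unproved.
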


\begin{cor} The maximum value of $\sum\limits_i\binom{s_1}i\binom{s_2}N\binom{s_1}{N-i}\binom Ni$ occurs when
$$N=\biggl[2s_1+s_2+\tfrac32-\sqrt{4s_1^2+4s_1+(s_2+\tfrac12)^2}\biggr]+\delta$$
with $\delta=0$ or $1$.
The most likely value of $|A\cup B\cup C|$ in the situation discussed above, with $s_3=s_1$, is $\biggl\lceil \sqrt{4s_1^2+4s_1+(s_2+\tfrac12)^2}
-\frac32\biggr\rceil-\delta$.\label{maincor}
\end{cor}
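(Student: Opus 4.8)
The plan is to deduce the Corollary directly from Theorem \ref{mainthm}. First I would note that, pulling $\binom{s_2}N$ outside the sum,
\[
\sum_i\binom{s_1}i\binom{s_2}N\binom{s_1}{N-i}\binom Ni=\binom{s_2}N\sum_i\binom{s_1}i\binom{s_1}{N-i}\binom Ni=f_{s_1,s_2}(N),
\]
so the expression to be maximized is literally the function $f_{s_1,s_2}$ of Theorem \ref{mainthm}. (Via the identity recorded in Section \ref{intro} with $s_3=s_1$, this also equals $|E_N|$, the quantity we actually care about, but that is not needed for the argument.)

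Next I would invoke Theorem \ref{mainthm}: since $f_{s_1,s_2}(N)$ increases for $N\le g(s_1,s_2)$ and decreases for $N\ge g(s_1,s_2)$, its maximum over integer $N$ is attained at $N=g(s_1,s_2)$. Substituting the closed form \eqref{1} for $g(s_1,s_2)$ gives the first displayed formula of the Corollary verbatim, with the same $\delta\in\{0,1\}$.

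For the last clause I would use the relation established in Section \ref{intro}: the most likely value of $|A\cup B\cup C|$ is $s_1+s_2+s_3-N=2s_1+s_2-N$ (taking $s_3=s_1$), where $N$ is the maximizer just found. Writing $D=4s_1^2+4s_1+(s_2+\tfrac12)^2$ and using the elementary fact that $n-\lfloor x\rfloor=\lceil n-x\rceil$ for $n\in\Z$ (applied with $n=2s_1+s_2$), I would compute
\[
2s_1+s_2-N=(2s_1+s_2)-\Bigl[2s_1+s_2+\tfrac32-\sqrt D\,\Bigr]-\delta=\Bigl\lceil\sqrt D-\tfrac32\Bigr\rceil-\delta,
\]
which is the asserted expression.

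I do not anticipate any genuine obstacle in the Corollary itself: all the difficulty is already dispatched in Theorem \ref{mainthm}, which supplies both the unimodality of $f_{s_1,s_2}$ and the formula for $g(s_1,s_2)$. The only points requiring a little care are getting the floor-to-ceiling conversion right so that $\delta$ is subtracted rather than added at the end, and observing that ``increasing up to $g$, decreasing after $g$'' does pin the maximum value at $N=g$ regardless of whether $f_{s_1,s_2}$ happens to take equal values at a pair of consecutive integers near $g$.
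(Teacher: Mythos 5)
Your proposal is correct and follows exactly the route the paper intends: the corollary is an immediate consequence of Theorem \ref{mainthm} (the paper gives no separate proof), obtained by identifying the sum with $f_{s_1,s_2}(N)$, locating the maximum at $N=g(s_1,s_2)$, and converting $s_1+s_2+s_3-N$ with $s_3=s_1$ via the identity $n-\lfloor x\rfloor=\lceil n-x\rceil$. Your floor-to-ceiling bookkeeping and the remark about ties at consecutive integers are both accurate.
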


It is conceivable that $f_{s_1,s_2}$ might achieve equal maxima at both $N$ and $N+1$. In such a case,
we accept either as an allowable value of $g(s_1,s_2)$.

To  illustrate the efficacy of our formula, we consider the
typical case $s_1=15$, and tabulate in Table \ref{t1} the actual
values of $g(15,s_2)$ for all $s_2$, and in Table \ref{t2} the five
values of $s_2$ for which $\delta=1$ in (\ref{1}).
Note how in these five cases the expression whose integer part appears in (\ref{1})
falls slightly short of the required value.

The following proposition generalizes the beginning and end of Table \ref{t1}. Theorem
\ref{mainthm} is true with $\delta=0$ in these cases.  Note also that the case $d=0$ of part (b) of Proposition \ref{easier}
shows that if $B$ is much larger than $A$ and $C$,
then the most likely occurrence is
that both $A$ and $C$ are contained in $B$.
\begin{prop}\label{easier} {}\quad
\begin{itemize}\item[a.] If $s_2\le\frac12(\sqrt{8s_1+9}-1)$, then $g(s_1,s_2)=s_2$.
\item[b.]  For all $s_1, s_2$, we have $g(s_1,s_2)\le 2s_1$. For $0\le d\le4$,
\begin{equation} \label{topfew}g(s_1,s_2)\ge 2s_1-d\text{ iff }s_2\ge\tfrac2{d+1}(s_1^2+s_1)-\tfrac{d+2}2.\end{equation}
\end{itemize}
\end{prop}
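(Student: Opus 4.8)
The plan is to derive everything from the factorization $f_{s_1,s_2}(N)=\binom{s_2}N h(N)$, where $h(N):=\sum_i\binom{s_1}i\binom{s_1}{N-i}\binom Ni$, together with the identity
$$h(N)=\sum_a\binom{s_1}a^2\binom{2a}{2s_1-N},$$
which I would obtain by reading off $[z^N]$ from the generating function $\sum_N h(N)z^N=\sum_a\binom{s_1}a^2z^{2(s_1-a)}(1+z)^{2a}$, itself a short Vandermonde computation. The identity shows $h(N)>0$ for $0\le N\le2s_1$ and $h(N)=0$ for $N>2s_1$, so $f_{s_1,s_2}$ is positive exactly on $\{0,\dots,\min(2s_1,s_2)\}$. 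By Theorem \ref{mainthm} there is a single ``increasing then decreasing'' transition at $g(s_1,s_2)$; this vanishing therefore forces $g(s_1,s_2)\le\min(2s_1,s_2)$ for all $s_1,s_2$, which is the first sentence of part (b), and for $1\le M\le\min(2s_1,s_2)$ it reduces ``$g(s_1,s_2)\ge M$'' to the two-term comparison $f_{s_1,s_2}(M)\ge f_{s_1,s_2}(M-1)$, the borderline equality being one between maximal values, where the convention after Corollary \ref{maincor} allows either index. Both parts are now inequalities between $f$ at consecutive arguments.

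For part (a), note that $s_2\le\tfrac12(\sqrt{8s_1+9}-1)$ is, after squaring, exactly $s_2(s_2+1)\le2(s_1+1)$, and this forces $s_2\le2s_1$; by the reduction it suffices to prove $h(s_2)\ge s_2\,h(s_2-1)$. With $k:=2s_1-s_2\ge0$ the identity gives
$$h(s_2)-s_2h(s_2-1)=\sum_a\binom{s_1}a^2\binom{2a}k\cdot\frac{k+1-s_2(2a-k)}{k+1},$$
and the factor $k+1-s_2(2a-k)$ decreases in $a$. Using $s_2(s_2+1)\le2(s_1+1)$, one checks this factor is $\ge0$ for every $a\le s_1-1$ and $\ge-1$ at $a=s_1$, so the only possibly negative term is $a=s_1$, and only in the boundary case $2s_1=(s_2+2)(s_2-1)$, which forces $s_2\ge2$. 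There one weighs that term against the $a=s_1-1$ term; after cancellation the required inequality collapses to $s_1s_2(s_2-1)(2s_2-1)>4s_1-2$, which is immediate. The cases $s_2\le1$ are trivial. Hence $g(s_1,s_2)\ge s_2$, and therefore $g(s_1,s_2)=s_2$.

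For the equivalence in part (b), fix $0\le d\le4$ and set $B_d:=\tfrac2{d+1}(s_1^2+s_1)-\tfrac{d+2}2$ (this is precisely the solution of $G=2s_1-d$, $G$ denoting the bracketed expression in \eqref{1}). The finitely many pairs with $2s_1-d-1<0$ satisfy $B_d<0\le s_2$ and $g\ge2s_1-d$ vacuously, while if $2s_1-d>s_2$ then $g\le s_2<2s_1-d$ with $s_2<B_d$; both cases are immediate. Otherwise $s_2\ge2s_1-d$, and by the reduction $g\ge2s_1-d$ iff $f_{s_1,s_2}(2s_1-d)\ge f_{s_1,s_2}(2s_1-d-1)$; here
$$\frac{f_{s_1,s_2}(2s_1-d)}{f_{s_1,s_2}(2s_1-d-1)}=\frac{s_2-2s_1+d+1}{2s_1-d}\cdot\frac{h(2s_1-d)}{h(2s_1-d-1)},$$
which is increasing and linear in $s_2$. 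The real work is computing $h(2s_1-d)=\sum_a\binom{s_1}a^2\binom{2a}{d}$ for $d\le5$: these are combinations of the sums $\sum_a a^j\binom{s_1}a^2$, each evaluable by Vandermonde (for instance $h(2s_1)=\binom{2s_1}{s_1}$, $h(2s_1-1)=s_1\binom{2s_1}{s_1}$, $h(2s_1-2)=(2s_1^2-2s_1+1)\binom{2s_1-2}{s_1-1}$, $h(2s_1-3)=\tfrac23(s_1-1)(s_1^2-s_1+1)\binom{2s_1-2}{s_1-1}$, and similarly for $d=4,5$). Substituting, the inequality $\ge1$ above becomes $s_2\ge T_d(s_1)$ for an explicit rational function $T_d$, and one finds $0\le B_d-T_d<1$ (with $T_d=B_d$ for $d=0,1$, and $B_2-T_2=\tfrac{2s_1-1}{6s_1^2-6s_1+3}$). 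It remains to see that no integer lies in $[T_d,B_d)$ --- for $d=2$ this is the inequality $\tfrac{2s_1-1}{6s_1^2-6s_1+3}\le\tfrac13$, i.e. $6(s_1-1)^2\ge0$ --- so that for integer $s_2$ the conditions $s_2\ge T_d(s_1)$ and $s_2\ge B_d$ coincide; this is \eqref{topfew}.

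The main obstacle is this last step: carrying out the closed-form evaluation of $h(2s_1-d)$ up to $d=5$ and then the rounding comparison that pins the genuine threshold $T_d(s_1)$ to the clean bound $B_d$. It is exactly the failure of ``$B_d-T_d$ avoids every integer'' once $d\ge5$ that explains the restriction $0\le d\le4$ in \eqref{topfew}. Everything in part (a) is routine once the identity for $h$ is in hand, except for the brief boundary-case estimate.
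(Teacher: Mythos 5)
Your argument is correct where it is carried out, but it reaches the two parts by routes that differ from the paper's in instructive ways. For part (a) the paper does essentially no computation: it observes that the bracketed expression in (\ref{1}) is always less than $s_2+1$ and equals $s_2$ exactly when $s_2\le\frac12(\sqrt{8s_1+9}-1)$, so Theorem \ref{mainthm}'s lower bound $g\ge[\cdots]$ combines with the trivial upper bound $g\le s_2$ (from $f_{s_1,s_2}(s_2+1)=0$) to finish; your direct term-by-term analysis of $h(s_2)-s_2h(s_2-1)$ via the identity $h(N)=\sum_a\binom{s_1}a^2\binom{2a}{2s_1-N}$ is a genuinely different and more self-contained proof (it uses Theorem \ref{mainthm} only for unimodality, not for the formula), at the cost of the boundary-case bookkeeping when $2s_1=(s_2+2)(s_2-1)$. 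For part (b) your route is the paper's in different clothing: the paper computes $-R_d(x)/P_d(x)$ explicitly for $d\le4$ by polynomial division and your $B_d-T_d$ is exactly that quantity (your $d=2$ value $\frac{2s_1-1}{3(2s_1^2-2s_1+1)}$ matches the paper's $-R_2/P_2$), so the closed forms for $h(2s_1-d)$ are just another packaging of Lemma \ref{Plem}'s remainder. One caution: your blanket statement ``one finds $0\le B_d-T_d<1$'' is not by itself enough to conclude that no integer lies in $[T_d,B_d)$; the fractional-part analysis you performed for $d=2$ (where $B_2\bmod 1\in\{0,\tfrac13\}$) must actually be repeated for $d=3$ and $d=4$, and the paper shows this is where the only real delicacy lives --- for $d=3$ one uses that $B_3$ is a half-integer with $-R_3/P_3\le 0.25$, and for $d=4$ the generic bound $-R_4/P_4\le 0.12$ fails at $x=3$, which must be checked separately ($s_2\ge T_4(3)$ and $s_2\ge B_4(3)$ happen to cut out the same integers). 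So your proposal is sound, but the ``routine'' deferred step for $d=3,4$ contains the one genuinely case-specific verification in the paper's proof and should not be waved through with a uniform ``$<1$'' bound.
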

\newpage

\begin{table}[h]
\begin{center}
\caption{Values of $g(15,s_2)$}
\label{t1}
\begin{tabular}{c|l}
$s_2$&$g(15,s_2)$\\
\hline
$[1,6]$&$s_2$\\
$[7,10]$&$s_2-1$\\
$[11,12]$&$s_2-2$\\
$[13,15]$&$s_2-3$\\
$[16,17]$&$s_2-4$\\
$18$&$13$\\
$[19,21]$&$14$\\
$[22,23]$&$15$\\
$[24,26]$&$16$\\
$[27,29]$&$17$\\
$[30,33]$&$18$\\
$[34,37]$&$19$\\
$[38,42]$&$20$\\
$[43,48]$&$21$\\
$[49,55]$&$22$\\
$[56,64]$&$23$\\
$[65,76]$&$24$\\
$[77,92]$&$25$\\
$[93,117]$&$26$\\
$[118,157]$&$27$\\
$[158,238]$&$28$\\
$[239,478]$&$29$\\
$\ge479$&$30$
\end{tabular}
\end{center}
\end{table}

\begin{table}[h]
\begin{center}
\caption{Cases in which equality does not hold in (\ref{1}) when $s_1=15$
and $\delta=0$}
\label{t2}
\begin{tabular}{r|cc}
$s_2$&$g(15,s_2)$&$2s_1+s_2+\tfrac32-\sqrt{4s_1^2+4s_1+(s_2+\tfrac12)^2}$\\
\hline
$6$&$6$&$5.84$\\
$10$&$9$&$8.78$\\
$15$&$12$&$11.85$\\
$17$&$13$&$12.91$\\
$19$&$14$&$13.89$
\end{tabular}
\end{center}
\end{table}

Next we introduce the polynomials involved in the proof.
We will usually replace $s_1$ by $x$, both because it will occur as a
variable in polynomials, and so that we can use the notation $x_i=x(x-1)\cdots(x-i+1)$.
For a nonnegative integer $d$, define a polynomial $P_d(x)$ of degree $2d$ by
$$P_d(x)=\sum_{i=0}^d\frac{(x_i)^2(x_{d-i})^2}{i!(d-i)!}.$$

We will prove the following key result in Section \ref{combsec}.
\begin{lem}\label{Plem}
When $P_{d+1}(x)$ is divided by $P_d(x)$, the quotient is $\frac 2{d+1}x^2-\frac{2d}{d+1}x+\frac d2$.
Let $R_d(x)$ denote the remainder. If $f$ is as in \ref{mainthm}, then, if $x$ and $d$ are integers,
\begin{equation}\label{feq}f_{x,s_2}(2x-d)\ge f_{x,s_2}(2x-d-1)\text{ iff }s_2\ge\tfrac2{d+1}(x^2+x)-\tfrac{d+2}2+\tfrac{R_d(x)}{P_d(x)}.
\end{equation}
\end{lem}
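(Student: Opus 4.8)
The plan (writing $x$ for $s_1$) is: first convert $f_{x,s_2}(2x-d)$ into the polynomial $P_d$, then prove the division statement via a recursion for the $P_d$, and finally read off \eqref{feq} by bookkeeping.

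\emph{Step 1.} In $f_{x,s_2}(2x-d)=\binom{s_2}{2x-d}\sum_i\binom xi\binom x{2x-d-i}\binom{2x-d}i$, substitute $i=x-k$; using $\binom x{x-k}=\binom xk$ and $\binom x{2x-d-i}=\binom x{d-k}$ the inner sum becomes $\sum_{k=0}^d\binom xk\binom x{d-k}\binom{2x-d}{x-k}$. Whenever the first two factors are nonzero, a factorial identity gives $\binom{2x-d}{x-k}=\frac{\binom{2x}x}{(2x)_d}x_k\,x_{d-k}$ with $(2x)_d:=2x(2x-1)\cdots(2x-d+1)$; since $\binom xk\,x_k=(x_k)^2/k!$ the sum collapses, and for integers $x\ge1$, $0\le d\le 2x$,
$$f_{x,s_2}(2x-d)=\binom{s_2}{2x-d}\binom{2x}x\frac{P_d(x)}{(2x)_d}.$$
Comparing this at $d$ and at $d+1$ — the factor $\binom{2x}x$ cancels, and $(2x)_{d+1}=(2x)_d(2x-d)$ cancels against $\binom{s_2}{2x-d}(2x-d)=\binom{s_2}{2x-d-1}(s_2-2x+d+1)$ — gives, whenever $\binom{s_2}{2x-d-1}>0$ (so that also $P_d(x),P_{d+1}(x)>0$),
$$f_{x,s_2}(2x-d)\ge f_{x,s_2}(2x-d-1)\iff s_2\ge 2x-d-1+\frac{P_{d+1}(x)}{P_d(x)}.$$
This is the range relevant to locating the maximum of $f_{x,s_2}$; the remaining (degenerate) cases, where an $f$-value vanishes, make the inequality trivial and do not affect that location.

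\emph{Step 2.} Put $h_k(x)=(x_k)^2/k!$, so $P_d=\sum_k h_k h_{d-k}$ and $(k+1)h_{k+1}=(x-k)^2h_k$. Using the symmetry $k\leftrightarrow d+1-k$ to write $\tfrac{d+1}2P_{d+1}=\sum_k k\,h_k h_{d+1-k}$, then $k\,h_k=(x-k+1)^2h_{k-1}$, and then symmetrizing $\sum_k k\,h_k h_{d-k}$ and $\sum_k k^2 h_k h_{d-k}$, I expect to obtain
$$P_{d+1}(x)=\frac{(2x-d)^2}{2(d+1)}P_d(x)+\frac{V_d(x)}{2(d+1)},\qquad V_d(x):=\sum_k(2k-d)^2 h_k(x)h_{d-k}(x).$$
Since $\frac{(2x-d)^2}{2(d+1)}=Q_d(x)-\frac d{2(d+1)}$, where $Q_d(x)=\tfrac2{d+1}x^2-\tfrac{2d}{d+1}x+\tfrac d2$ is the claimed quotient, this reads $P_{d+1}=Q_dP_d+R_d$ with $R_d=\frac{V_d-dP_d}{2(d+1)}$. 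Finally $\deg R_d<2d=\deg P_d$: both $V_d$ and $dP_d$ have degree $\le 2d$, and their leading coefficients agree because $\sum_k(2k-d)^2\binom dk=d\cdot 2^d=d\sum_k\binom dk$ (the variance of a symmetric $\mathrm{Bin}(d,\tfrac12)$). Hence $Q_d$ is the quotient and $R_d$ the remainder, as claimed.

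\emph{Step 3.} Then $P_{d+1}/P_d=Q_d(x)+R_d(x)/P_d(x)$, so Step 1's inequality becomes $s_2\ge 2x-d-1+Q_d(x)+R_d(x)/P_d(x)$, and the one-line simplification $2x-d-1+Q_d(x)=\tfrac2{d+1}(x^2+x)-\tfrac{d+2}2$ yields exactly \eqref{feq}. The main obstacle is Step 2: the symmetrization bookkeeping must be carried out carefully, and the two points to notice are that the quadratic $(2x-d)^2/(2(d+1))$ that arises naturally differs from the stated quotient only by a constant — which is absorbed into the remainder — and that $V_d-dP_d$ drops a degree. Step 1's factorial manipulation and Step 3's arithmetic are routine.
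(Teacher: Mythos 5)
Your proof is correct. Step 1 arrives at the paper's own intermediate equivalence (\ref{step1}), $f_{x,s_2}(2x-d)\ge f_{x,s_2}(2x-d-1)$ iff $s_2\ge P_{d+1}(x)/P_d(x)+2x-d-1$, by the same cancellations, merely packaged as a closed form $f_{x,s_2}(2x-d)=\binom{s_2}{2x-d}\binom{2x}{x}P_d(x)/(2x)_d$ rather than as a direct manipulation of the inequality, and Step 3 is the same arithmetic as the paper's. The genuine divergence is Step 2. The paper computes the top three coefficients of $P_d(x)$ (formula (\ref{3})) by a fairly laborious evaluation of sums and then checks that multiplying them by the claimed quotient reproduces the top three coefficients of $P_{d+1}(x)$, which forces $\deg R_d<2d$. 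You instead derive the exact identity
$$P_{d+1}(x)=\frac{(2x-d)^2}{2(d+1)}P_d(x)+\frac{1}{2(d+1)}\sum_k(2k-d)^2h_k(x)h_{d-k}(x),\qquad h_k=\frac{(x_k)^2}{k!},$$
from the recursion $(k+1)h_{k+1}=(x-k)^2h_k$ together with the symmetries $\sum k\,h_kh_{d+1-k}=\tfrac{d+1}2P_{d+1}$ and $\sum k\,h_kh_{d-k}=\tfrac d2P_d$; the degree drop in $R_d=\frac{1}{2(d+1)}\bigl(\sum_k(2k-d)^2h_kh_{d-k}-dP_d\bigr)$ then follows from the binomial variance identity $\sum_k(2k-d)^2\binom dk=d\,2^d$. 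I have checked the bookkeeping; for instance your formula gives $R_2=-x^2(2x-1)/3$, consistent with the table of $-R_d/P_d$ in the paper's proof of Proposition \ref{easier}. Your route is shorter and buys more: it produces a closed form for the remainder as a small signed combination of the positive quantities $h_kh_{d-k}$, which is exactly the kind of structural information the paper has to fight for in the proof of Lemma \ref{hardlem} and lacks entirely in Section \ref{sec3}; by contrast, the paper's expansion (\ref{3}) is reused later (e.g.\ for the limit (\ref{6})), so it is not wasted effort in context. The only point to spell out in a final write-up is the degenerate bookkeeping in Step 1 (the cases where a binomial coefficient, and hence an $f$-value, vanishes), which you correctly observe cannot affect where the maximum occurs.
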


In Section \ref{combsec}, we will also prove the following result, the proof of which is
less straightforward than that of Lemma \ref{Plem}.
\begin{lem}\label{hardlem} For $d\ge 5$ and $x>d/2$, $-0.5<R_d(x)/P_d(x)<0$.\end{lem}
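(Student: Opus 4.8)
The plan is to reduce the claim to two polynomial inequalities and then study the signs of the polynomials that arise. In the defining formula $P_d(x)=\sum_i (x_i)^2(x_{d-i})^2/(i!(d-i)!)$ every summand is nonnegative, and the one with $i=\lceil d/2\rceil$ is strictly positive whenever $x>d/2$ (for then $x$ is a non-integer, or an integer $\ge\lceil d/2\rceil$, so $x_{\lceil d/2\rceil}\,x_{\lfloor d/2\rfloor}\ne0$). Hence $P_d(x)>0$ on $(d/2,\infty)$, and $-0.5<R_d(x)/P_d(x)<0$ is equivalent to $R_d(x)<0$ and $2R_d(x)+P_d(x)>0$ there.

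I would next factor out the known low zeros of these polynomials. Reading the defining sum again, $P_d$ and $P_{d+1}$ vanish to order $\ge2$ at every integer $m$ with $0\le m\le\lfloor(d-1)/2\rfloor$, so $\Pi_d(x):=\prod_{m=0}^{\lfloor(d-1)/2\rfloor}(x-m)^2$ divides $P_d$, $R_d$, and $2R_d+P_d$; as $\Pi_d(x)>0$ for $x>d/2$, the two inequalities amount to $R_d/\Pi_d<0$ and $(2R_d+P_d)/\Pi_d>0$ on $(d/2,\infty)$, and since $R_d/\Pi_d$ has negative leading coefficient (a short leading-coefficient computation) while $(2R_d+P_d)/\Pi_d$ has leading coefficient $2^d/d!>0$, it is enough to show that these two polynomials have no real zero in $[d/2,\infty)$. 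One should expect this to be delicate near the left endpoint: completing the square gives $q_d(x)=\tfrac2{d+1}(x-\tfrac d2)^2+\tfrac d{2(d+1)}$, and, since $P_{d+1}(d/2)=0$ for $d$ even (again from the defining sum), $R_d(d/2)/P_d(d/2)=-\tfrac d{2(d+1)}$, which tends to $-\tfrac12$ as $d\to\infty$ — so the lower bound in the lemma is essentially sharp and the hypothesis $d\ge5$ is genuine for any uniform argument.

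Proving that zero-freeness uniformly in $d$ is the heart of the matter, and the reason the argument is less straightforward than the proof of Lemma \ref{Plem}. I would approach it through the convolution structure of $P_d$. Writing $a_i=a_i(x):=(x_i)^2/i!$, one has $P_d=\sum_{i=0}^d a_ia_{d-i}$, and, using $a_{i+1}=(x-i)^2a_i/(i+1)$ and symmetrizing in $i\leftrightarrow d-i$, $P_{d+1}=\sum_{i=0}^d\beta_i\,a_ia_{d-i}+\tfrac{(x-d)^2}{d+1}a_0a_d$ with $\beta_i=\tfrac12\bigl(\tfrac{(x-d+i)^2}{d+1-i}+\tfrac{(x-i)^2}{i+1}\bigr)$. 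Subtracting $q_dP_d$ yields $R_d=\sum_i(\beta_i-q_d)\,a_ia_{d-i}+\tfrac{(x-d)^2}{d+1}a_0a_d$, and similarly $2R_d+P_d=\sum_i(2\beta_i-2q_d+1)\,a_ia_{d-i}+\tfrac{2(x-d)^2}{d+1}a_0a_d$; each coefficient $\beta_i-q_d$ is an explicit quadratic in $x$ and each $a_ia_{d-i}\ge0$. What remains is a weighted estimate showing these two sums keep the required sign for all $x\ge d/2$, and I expect the obstacle to be exactly this estimate near $x=d/2$, where the weights $a_i(d/2)a_{d-i}(d/2)$ are small for extreme $i$ so that the sign is governed by the central terms; one must verify — with $d\ge5$ furnishing enough room — that the negative central contributions outweigh the possibly positive boundary term $\tfrac{(x-d)^2}{d+1}a_0a_d$. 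If the direct estimate becomes unwieldy, a fallback is to extract from $\sum_d P_d(x)t^d=\bigl(\sum_i a_i(x)t^i\bigr)^2$ a linear recurrence in $d$ for the $P_d$ and to induct on $d$ for the continued-fraction-type bounds $q_d-\tfrac12<P_{d+1}(x)/P_d(x)<q_d$.
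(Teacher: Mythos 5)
Your setup runs parallel to the paper's: you prove $P_d>0$ on $(d/2,\infty)$ as a sum of nonnegative terms with a strictly positive central term, restate the lemma as the pair of polynomial inequalities $R_d<0$ and $2R_d+P_d>0$, and divide out the common factor $\prod_{m=0}^{\lfloor (d-1)/2\rfloor}(x-m)^2$, exactly as the paper does with its reduced polynomials $\widetilde P$ and $\widetilde R$. Your convolution decomposition $R_d=\sum_i(\beta_i-q_d)\,a_ia_{d-i}+\tfrac{(x-d)^2}{d+1}a_0a_d$ is algebraically correct, and the observation that $R_d(d/2)/P_d(d/2)=-\tfrac{d}{2(d+1)}$ for even $d$ is a nice check that the constant $-\tfrac12$ is asymptotically sharp.

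But the argument stops exactly where the real work begins. You write that ``what remains is a weighted estimate showing these two sums keep the required sign,'' and that estimate \emph{is} the lemma: the weights $\beta_i-q_d$ are quadratics in $x$ whose leading coefficients are negative for central $i$ and positive for extreme $i$, the products $a_ia_{d-i}$ vary with $i$, and the boundary term is positive, so no termwise bound or crude majorization will decide the sign. The paper's proof supplies three specific ingredients for which you offer no substitute: (i) a telescoping cancellation --- after shifting $x\mapsto x+b$ (with $d=2b+\eps$) each coefficient is rewritten as a difference of two squares times binomial coefficients, and the quadratic parts of consecutive terms cancel against each other, leaving \emph{constant} coefficients $\binom{2b+1}{i}\bigl(2(b-i)^2+b-2i\bigr)$ multiplying the double products; (ii) the exact identity $\sum_{i=0}^{b-1}\binom{2b+1}{i}\bigl(2(b-i)^2+b-2i\bigr)=b\binom{2b+1}{b}$, so these constants sum to zero; and (iii) a rearrangement argument: the double products are nonnegative and increasing in $i$, the constants sum to zero and are positive precisely for $i$ below a threshold, and any such sum $\sum\alpha_i\beta_i$ with $0\le\alpha_1\le\cdots\le\alpha_b$ is negative. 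The bound $2R_d+P_d>0$ then falls out because subtracting $\tfrac12$ from $q_d$ makes every one of those constants positive. None of (i)--(iii) appears in your writeup, and your fallback (a recurrence in $d$ and induction on the bounds $q_d-\tfrac12<P_{d+1}/P_d<q_d$) restates the claim rather than proving it. As written, the proposal has a genuine gap at its central step.
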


Now we can prove the main theorem.
\begin{proof}[Proof of Theorem \ref{mainthm}] By Lemma \ref{hardlem}, increasing $d$ by 1 changes $R_d(x)/P_d(x)$ by at most 1/2, and clearly it decreases
$\frac2{d+1}(x^2+x)-\tfrac{d+2}2$ by more than 1/2.
Thus, for $x> d/2$,
the RHS of (\ref{feq}) is a decreasing function of $d$, at least for integer values of $d$. This, with (\ref{feq}),
implies the unimodality part of the theorem, with maximum of $f_{s_1,s_2}(N)$ occurring for $N=2s_1-d$ for the smallest
integer $d$ such that the RHS of (\ref{feq}) is satisfied.

We will show that  Lemma \ref{hardlem} also
implies that \begin{equation}\label{two}[2s_1-d_1]\le g(s_1,s_2)\le[2s_1-d_1]+1,\end{equation} where $d_1$ satisfies
$$s_2=\tfrac2{d_1+1}(s_1^2+s_1)-\tfrac{d_1+2}2.$$
This value is $d_1=-s_2-\tfrac32+\sqrt{4s_1^2+4s_1 +(s_2+\tfrac12)^2}$, yielding (\ref{1}).

To prove (\ref{two}), write $d_1=d_2-t$, with $0\le t<1$, and $d_2$ an integer. Thus $[2s_1-d_1]=2s_1-d_2$.
The RHS of (\ref{feq}) is satisfied using $d_2$ since, using \ref{hardlem} at the last step,
$$s_2=\tfrac2{d_1+1}(s_1^2+s_1)-\tfrac{d_1+2}2\ge\tfrac2{d_2+1}(s_1^2+s_1)-\tfrac{d_2+2}2\ge\tfrac2{d_2+1}(s_1^2+s_1)-\tfrac{d_2+2}2+\tfrac
{R_{d_2}(s_1)}{P_{d_2}(s_1)}.$$
Therefore $g(s_1,s_2)\ge 2s_1-d_2$. We will now show that the RHS of (\ref{feq}) is not satisfied using $d=d_2-2$, which implies
$g(s_1,s_2)\le 2s_1-d_2+1$,
hence completing the proof.

To see this, let $h(d)=s_2-\bigl(\frac2{d+1}(s_1^2+s_1)-\tfrac{d+2}2\bigr)$. Then $h(d_1)=0$ and $d_1-(d_2-2)>1$,
hence $h(d_2-2)<-\frac12$. Therefore when $d=d_2-2$, using Lemma \ref{hardlem} again, $s_2-\bigl(\frac2{d+1}(s_1^2+s_1)-\tfrac{d+2}2+\frac{R_d(s_1)}{P_d(s_1)})<0$, as desired.\end{proof}

In terms of $R_d/P_d$, we give in \ref{rfnmt2} a precise result about whether $\delta=0$ or 1 in Theorem \ref{mainthm}. The usefulness of this
is limited by the complicated nature of $R_d/P_d$. In Section \ref{sec3}, we discuss a very strong conjecture regarding $R_d/P_d$,
which, if proved, would make Theorem \ref{rfnmt2} more useful. See Theorem \ref{rfnmt}.
The evidence for this conjecture leads to remarkable conjectural patterns among some rational functions. See Table \ref{oddd} and Conjecture
\ref{oddconj}.
\begin{thm}\label{rfnmt2}  Let $$d_0=\biggl[\sqrt{4s_1^2+4s_1+(s_2+\tfrac12)^2}-s_2-\tfrac32\biggr].$$ Then (\ref{1}) is true with
$\delta=1$  iff
\begin{equation}\label{cond2} 0<\tfrac2{d_0+1}(s_1^2+s_1)-\tfrac{d_0+2}2-s_2\le-\frac{R_{d_0}(s_1)}{P_{d_0}(s_1)}.\end{equation}
\end{thm}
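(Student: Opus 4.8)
The plan is to combine Lemma~\ref{Plem} with the argument already used in the proof of Theorem~\ref{mainthm}, but now keeping careful track of the two candidate values $N=2s_1-d_0$ and $N=2s_1-d_0-1$ rather than just bracketing $g$ within~$1$. Recall from Lemma~\ref{Plem} that $f_{s_1,s_2}(2s_1-d)\ge f_{s_1,s_2}(2s_1-d-1)$ exactly when
\begin{equation*}s_2\ge\tfrac2{d+1}(s_1^2+s_1)-\tfrac{d+2}2+\tfrac{R_d(s_1)}{P_d(s_1)},\end{equation*}
and that by the unimodality established in Theorem~\ref{mainthm} the maximizing value is $N=2s_1-d$ for the \emph{smallest} integer $d$ for which this inequality holds. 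So $g(s_1,s_2)=2s_1-d_*$ where $d_*$ is that smallest $d$, and the question of whether $\delta=0$ or $1$ in (\ref{1}) is precisely the question of whether $2s_1-d_*$ equals $[2s_1+s_2+\tfrac32-\sqrt{4s_1^2+4s_1+(s_2+\tfrac12)^2}]$ or that quantity plus~$1$.

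First I would rewrite $[2s_1+s_2+\tfrac32-\sqrt{4s_1^2+4s_1+(s_2+\tfrac12)^2}]$ as $2s_1-[\sqrt{4s_1^2+4s_1+(s_2+\tfrac12)^2}-s_2-\tfrac32]=2s_1-d_0$, using that $2s_1\in\Z$ and $-[y]=[-y]$ fails in general, so one must instead use $[a-y]=a-[y]$ when $a\in\Z$ and $y\notin\Z$ (the generic case; the boundary case $y\in\Z$ can be absorbed into the $\delta=0$ conclusion since then the condition (\ref{cond2}) has a non-strict lower bound that is violated). Hence (\ref{1}) holds with $\delta=1$ iff $g(s_1,s_2)=2s_1-d_0+1$, i.e.\ iff $d_*=d_0-1$, and it holds with $\delta=0$ iff $d_*=d_0$ (we already know from Theorem~\ref{mainthm} that $d_*\in\{d_0-1,d_0\}$, since $g$ is bracketed within~$1$ by $[2s_1-d_1]=2s_1-d_0$). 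So $\delta=1$ iff the defining inequality of Lemma~\ref{Plem} \emph{fails} at $d=d_0-1$, i.e.\ iff
\begin{equation*}s_2<\tfrac2{d_0}(s_1^2+s_1)-\tfrac{d_0+1}2+\tfrac{R_{d_0-1}(s_1)}{P_{d_0-1}(s_1)},\end{equation*}
and \emph{holds} at $d=d_0$. The second of these two facts I would get for free: $d_0$ is, essentially by construction, an integer $\ge d_1$ (since $d_0=[d_1]$), and the chain of inequalities displayed in the proof of Theorem~\ref{mainthm} shows the $d_0$-inequality is automatically satisfied. So the real content is translating ``$\delta=1$'' into exactly the inequality (\ref{cond2}).

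The key step is thus to show that the failure of the $(d_0-1)$-inequality is equivalent to (\ref{cond2}). Here I would use that $h(d):=s_2-(\tfrac2{d+1}(s_1^2+s_1)-\tfrac{d+2}2)$ vanishes at $d_1$ and is increasing in $d$ (its derivative is positive for $x>0$), and that $d_1\in(d_0,d_0+1]$, so $h(d_0)\le 0$ and $h(d_0)>h(d_0-1)$ with $h(d_0)-h(d_0-1)>1$ — more precisely $h(d_0)\in(-1,0]$ since $h(d_1)=0$ and $d_1-d_0<1$ gives $|h(d_0)|<1$ by the Lipschitz bound on $h$ over unit intervals (using $s_1>d_0/2$, which is guaranteed by Proposition~\ref{easier}(b) since $g(s_1,s_2)\le 2s_1$ forces $d_*\ge 0$ and hence $d_0\le 2s_1$; the edge $d_0=2s_1$ is handled separately via Proposition~\ref{easier}(b)). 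Then: the $(d_0-1)$-inequality is $h(d_0-1)\ge R_{d_0-1}(s_1)/P_{d_0-1}(s_1)$; I want to convert the quantity $\tfrac2{d_0+1}(s_1^2+s_1)-\tfrac{d_0+2}2-s_2=-h(d_0)$ so that the $R/P$ term carries the index $d_0$ rather than $d_0-1$. The cleanest route is to apply the monotonicity argument from Theorem~\ref{mainthm}'s proof \emph{one step at a time}: since by Lemma~\ref{hardlem} the RHS of (\ref{feq}) strictly decreases as $d$ increases by~$1$ (for $x>d/2$), the inequality failing at $d_0-1$ but the ``reference'' value $d_1$ lying in $(d_0,d_0+1]$ means exactly that $h(d_0)\in(-1,0]$ \emph{and} $h(d_0)>-R_{d_0}(s_1)/P_{d_0}(s_1)$ — because the threshold for the $d_0$-inequality to hold is $h(d_0)\ge R_{d_0}(s_1)/P_{d_0}(s_1)$, i.e.\ $-h(d_0)\le -R_{d_0}(s_1)/P_{d_0}(s_1)$, which is automatic, while the threshold for the $(d_0-1)$-inequality to \emph{fail} is $-h(d_0-1)>-R_{d_0-1}(s_1)/P_{d_0-1}(s_1)$, which by the strict-decrease-by-one property of the whole RHS of (\ref{feq}) is implied by and implies $-h(d_0)>0$ together with the Lemma~\ref{hardlem} bounds. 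Assembling: $\delta=1$ iff $0<-h(d_0)\le -R_{d_0}(s_1)/P_{d_0}(s_1)$, which is (\ref{cond2}).

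The main obstacle I anticipate is the bookkeeping around \emph{which} index the remainder term should carry and the strictness of the inequalities at the two endpoints $-h(d_0)=0$ and $-h(d_0)=-R_{d_0}(s_1)/P_{d_0}(s_1)$. At $-h(d_0)=0$ one has $d_1=d_0$, an integer, and one must check directly (using that $R_{d_0}(s_1)/P_{d_0}(s_1)<0$ by Lemma~\ref{hardlem} when $d_0\ge 5$, and by Proposition~\ref{easier} for $d_0\le 4$) that the $(d_0-1)$-inequality then holds, so $\delta=0$ — consistent with the \emph{strict} lower bound $0<\cdots$ in (\ref{cond2}). At the upper end, equality $-h(d_0)=-R_{d_0}(s_1)/P_{d_0}(s_1)$ forces $f_{s_1,s_2}$ to take equal maxima at $2s_1-d_0$ and $2s_1-d_0+1$, which the convention after Corollary~\ref{maincor} allows us to record as $\delta=1$ — consistent with the \emph{non-strict} upper bound $\le$ in (\ref{cond2}). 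The low-index cases $d_0\le 4$, where Lemma~\ref{hardlem} does not apply, must be checked against Proposition~\ref{easier}, whose formulas for $g(s_1,s_2)\ge 2s_1-d$ are stated with $\delta=0$; one verifies that in exactly those ranges (\ref{cond2}) fails, keeping the two characterizations in agreement.
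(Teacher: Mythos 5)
There is a genuine error in your proposal: an off-by-one mistake in the floor-function manipulation that shifts the entire argument onto the wrong instance of (\ref{feq}). Writing $d_1=\sqrt{4s_1^2+4s_1+(s_2+\frac12)^2}-s_2-\frac32$ and $d_0=[d_1]$, when $d_1\notin\Z$ one has $[2s_1-d_1]=2s_1-d_0-1$, not $2s_1-d_0$ (the identity $[a-y]=a-[y]$ you invoke is false for $y\notin\Z$; e.g.\ $[-0.5]=-1\ne -[0.5]$). Consequently the bracketing from the proof of Theorem \ref{mainthm} reads $2s_1-d_0-1\le g(s_1,s_2)\le 2s_1-d_0$, so $\delta=1$ means $g=2s_1-d_0$, i.e.\ the smallest admissible $d$ is $d_0$ itself --- not $d_0-1$ as you claim. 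The decisive condition is therefore simply whether the RHS of (\ref{feq}) holds at $d=d_0$, and that inequality, rearranged, \emph{is} the $\le$-part of (\ref{cond2}) verbatim; the $(>0)$-part is automatic because the middle expression of (\ref{cond2}) vanishes at $d_1$ and is strictly decreasing in $d$, with $d_0<d_1$. This is essentially the whole of the paper's proof (plus the separate, easy case $d_1\in\Z$, where both sides of the iff fail).

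Your misplaced floor forces two compensating distortions. First, you assert that the $d_0$-inequality is ``automatically satisfied'' by the chain of inequalities in the proof of Theorem \ref{mainthm}; but that chain establishes the inequality at $d_2=\lceil d_1\rceil=d_0+1$, not at $d_0$. If the $d_0$-inequality were automatic, $\delta$ would always equal $1$, contradicting Table \ref{t2}. Second, you locate all the content in the \emph{failure} of the inequality at $d=d_0-1$; but that failure is itself automatic (it is exactly the ``$d=d_2-2$'' step in the proof of Theorem \ref{mainthm}) and so cannot be equivalent to (\ref{cond2}), which sometimes fails. The paragraph in which you try to ``convert'' the index from $d_0-1$ to $d_0$ via the strict-decrease-by-one property is where the argument breaks down irreparably: monotonicity of the RHS of (\ref{feq}) in $d$ tells you the inequalities at consecutive $d$ are ordered, not equivalent. (A smaller quibble: your claim $|h(d_0)|<1$ from ``the Lipschitz bound on $h$ over unit intervals'' is unjustified, since $h'$ is roughly $2(s_1^2+s_1)/(d+1)^2+\frac12$, which is large for small $d$; fortunately nothing is needed from this once the floor is computed correctly.) Your treatment of the boundary/convention issues (equal maxima, strict vs.\ non-strict endpoints) is sound, but the core identification of which inequality governs $\delta$ must be repaired as above.
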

\begin{proof}
 Let $d_1=\sqrt{4s_1^2+4s_1+(s_2+\tfrac12)^2}-s_2-\tfrac32$. Then $s_2=\frac2{d_1+1}(s_1^2+s_1)-\frac{d_1+2}2$.

If $d_1$ is an integer, then $d_0=d_1$ and the $(>0)$-condition in (\ref{cond2}) is not satisfied, and the RHS of (\ref{feq}) is not satisfied when $d=d_1-1$
by an argument similar to the proof of \ref{mainthm}. Hence $g(s_1,s_2)=2s_1-d_1$, verifying Theorem \ref{rfnmt2} in this case.

If $d_1$ is not an integer, then (\ref{1}) is true with $\delta=1$ iff the RHS of (\ref{feq}) is satisfied using $d_0$ (since $2s_1-d_0=[2s_1-d_1]+1$),
but the RHS of (\ref{feq}) is exactly the $\le$-part of (\ref{cond2}). Note that the $(>0)$-part of (\ref{cond2}) is certainly satisfied in this case,
since the middle expression in (\ref{cond2}) equals 0 using $d_1$, and is a strictly decreasing function of $d$.\end{proof}

\section{Combinatorial proofs}\label{combsec}
In this section we prove Lemma \ref{Plem}, Proposition \ref{easier}, and Lemma
\ref{hardlem}.

\begin{proof}[Proof of Lemma \ref{Plem}]
Cancelling common factors in the binomial coefficients involving $s_2$, we find that
the LHS of (\ref{feq}) is equivalent to
\begin{eqnarray*}&&(s_2-2x+d+1)\sum\frac{(x!)^2(2x-d)!}{(i!)^2((2x-d-i)!)^2(x-i)!(d+i-x)!}\\
&\ge&(2x-d)\sum\frac{(x!)^2(2x-d-1)!}{(i!)^2((2x-d-1-i)!)^2(x-i)!(d+1+i-x)!}.\end{eqnarray*}
Cancelling $(2x-d)!$ and letting $j=x-i$, we obtain the equivalent condition
\begin{eqnarray*}&&(s_2-2x+d+1)\sum\frac{(x!)^2}{(x-j)!^2(x+j-d)!^2j!(d-j)!}\\
&\ge&\sum\frac{(x!)^2}{(x-j)!^2(x+j-d-1)!^2j!(d+1-j)!}.\end{eqnarray*}
Multiplying both sides by $(x!)^2$, the inequality becomes
$$(s_2-2x+d+1)\sum\frac{(x_j)^2(x_{d-j})^2}{j!(d-j)!}\ge\sum\frac{(x_j)^2(x_{d+1-j})^2}{j!(d+1-j)!},$$
and this readily yields
\begin{equation}\label{step1}f_{x,s_2}(2x-d)\ge f_{x,s_2}(2x-d-1)\text{ iff }s_2\ge\frac{P_{d+1}(x)}{P_d(x)}+2x-d-1.\end{equation}

Our next claim is that the leading terms of $P_d(x)$ are given by
\begin{equation}\label{3}P_d(x)=\frac{2^d}{d!}x^{2d}-\frac{2^{d-1}}{(d-2)!}x^{2d-1}+\frac{2^{d-3}}{3(d-2)!}(3d^2-5d+4)x^{2d-2}+\text{lower}.\end{equation}

The proof of (\ref{3}) makes frequent use of
\begin{equation}\label{short}\sum\tfrac1{i!(d-i)!}=\frac{2^d}{d!},\end{equation}
which is true since each side is the coefficient of $x^d$ in $e^x\cdot e^x=e^{2x}$. Then (\ref{short}) is exactly the coefficient of $x^{2d}$ in $P_d(x)$.
The coefficient of $x^{2d-1}$ is
$$-\sum_{i=0}^d\frac{i(i-1)+(d-i)(d-i-1)}{i!(d-i)!}=-2\sum_{i=0}^d\tfrac1{(i-2)!(d-i)!}=-\frac{2^{d-1}}{(d-2)!}.$$
We have used symmetry in the first step.
Note that the $i(i-1)$ comes as $\sum\limits_{j=0}^{i-1}2j$.

The next coefficient of $P_d(x)$ is obtained similarly, but involves much more work.
The coefficient of $x^{2d-2}$ in $(x_i)^2(x_{d-i})^2$ is
$$\sum_{j=0}^{i-1}j^2+\sum_{j=0}^{d-i-1}j^2+4\sum_{j=0}^{i-1}j\sum_{j=0}^{d-i-1}j+\sum_{1\le j_1< j_2< i}4j_1j_2+\sum_{1\le j_1< j_2<d-i}4j_1j_2.$$
Noting that $$2\sum_{1\le j_1< j_2< i}j_1j_2=\biggl(\sum_{j=1}^{i-1}j\biggr)^2-\sum_{j=1}^{i-1}j^2,$$
we obtain
\begin{eqnarray*}&&-\sum_{j=0}^{i-1}j^2-\sum_{j=0}^{d-i-1}j^2+(i-1)i(d-i-1)(d-i)+2\bigl(\tfrac{(i-1)i}2\bigr)^2+2\bigl(\tfrac{(d-i-1)(d-i)}2\bigr)^2\\
&=&-\tfrac{(i-1)i(2i-1)}6-\tfrac{(d-i-1)(d-i)(2d-2i-1)}6+(i-1)i(d-i-1)(d-i)\\
&&+\tfrac{i^2(i-1)^2}2+\tfrac{(d-i)^2(d-i-1)^2}2.
\end{eqnarray*}
Thus, using symmetry, the coefficient of $x^{2d-2}$ in
$\sum\limits_{i=0}^d\dfrac{(x_i)^2(x_{d-i})^2}{i!(d-i)!}$ is
$$\sum\tfrac{i(i-1)}{(i-2)!(d-i)!}+\sum\tfrac1{(i-2)!(d-i-2)!}-\tfrac13\sum\tfrac{2i-1}{(i-2)!(d-i)!}.$$

Next note that
\begin{eqnarray*}\sum\tfrac{i(i-1)}{(i-2)!(d-i)!}&=&\sum\tfrac{(j+1)(j+2)}{j!(d-2-j)!}\\
&=&2\frac{2^{d-2}}{(d-2)!}+3\frac{2^{d-3}}{(d-3)!}+\sum\frac{(j-1)+1}{(j-1)!(d-2-j)!}\\
&=&\frac{2^{d-1}}{(d-2)!}+3\frac{2^{d-3}}{(d-3)!}+\frac{2^{d-4}}{(d-4)!}+\frac{2^{d-3}}{(d-3)!}\\
&=&\frac{2^{d-4}}{(d-2)!}(8+8(d-2)+(d-2)(d-3))\\
&=&\frac{2^{d-4}}{(d-2)!}(d^2+3d-2).\end{eqnarray*}
Also,
$$\sum\tfrac{2i-1}{(i-2)!(d-i)!}=2\sum\tfrac1{(i-3)!(d-i)!}+3\sum\tfrac1{(i-2)!(d-i)!}=\frac{2^{d-2}}{(d-2)!}(d-2+3).$$
Hence the desired coefficient equals
\begin{eqnarray*}&&\frac{2^{d-4}}{(d-2)!}(d^2+3d-2)+\frac{2^{d-4}}{(d-4)!}-\frac{2^{d-2}}{3(d-2)!}(d+1)\\
&=&\frac{2^{d-3}}{3(d-2)!}(3d^2-5d+4),\end{eqnarray*}
as asserted in (\ref{3}).

Next we claim that  \begin{equation}\label{4}\frac{P_{d+1}(x)}{P_d(x)}=\tfrac 2{d+1}x^2-\tfrac{2d}{d+1}x+\tfrac d2+\frac{R_d(x)}{P_d(x)},\end{equation}
with $\deg(R_d(x))<2d$, the claim of the first part of Lemma \ref{Plem}. This can be discovered by division of polynomials, using (\ref{3}), but it is simpler just to verify that
\begin{eqnarray*}&&\biggl(\frac{2^d}{d!}x^{2d}-\frac{2^{d-1}}{(d-2)!}x^{2d-1}+\frac{2^{d-3}}{3(d-2)!}
(3d^2-5d+4)x^{2d-2}\biggr)
\biggl(\tfrac 2{d+1}x^2-\tfrac{2d}{d+1}x+\tfrac d2\biggr)\\
&=&\frac{2^{d+1}}{(d+1)!}x^{2d+2}-\frac{2^d}{(d-1)!}x^{2d+1}+\frac{2^{d-2}}{3(d-1)!}(3(d+1)^2-5(d+1)+4)x^{2d}+\text{lower.}
\end{eqnarray*}

Combining (\ref{step1}) and (\ref{4}), we obtain (\ref{feq}).\end{proof}

\begin{proof}[Proof of Proposition \ref{easier}]
(a). One can easily show that the bracketed
 expression in (\ref{1}) is always less than $s_2+1$, and equals $s_2$ if and only if
$s_2$ satisfies the hypothesis of \ref{easier}(a). Noting that $g(s_1,s_2)$ is $\ge$ the bracketed expression of (\ref{1}) by Theorem
\ref{mainthm}, and is $\le s_2$ since $f_{s_1,s_2}(s_2+1)=0$, part (a) follows.

(b). The first statement is true since $f_{s_1,s_2}(2s_1+1)=0$.

After cancelling common factors in the numerator and denominator, one can
compute that
$$\frac{-R_d(x)}{P_d(x)}=\begin{cases}0&d=0,1\\
(2x-1)/(3(2x^2-2x+1))&d=2\\
3(x-1)/(4(x^2-x+1))&d=3\\
6(2x^3-9x^2+3x-9)/(5(2x^4-8x^3+14x^2-5x+3))&d=4
\end{cases}$$
Thus (\ref{topfew}) follows immediately from (\ref{feq}) if $d=0$ or $1$.

Note that (\ref{feq}) is only meaningful if $2x>d$. If $d=2$, then $x\ge2$. For such $x$, $-0.2\le R_2(x)/P_2(x)<0$.
Since, for integer $x$, $\frac23(x^2+x)-2$ is either an integer or an integer plus 1/3, an integer $s_2$ satisfies
$$s_2\ge\tfrac23(x^2+x)-2+\tfrac{R_2(x)}{P_2(x)}\text{ iff }s_2\ge\tfrac23(x^2+x)-2,$$ and so (\ref{topfew}) follows from
(\ref{feq}).

A similar argument works for $d=3$ and $4$. For $d=3$, we have that $\tfrac2{d+1}(x^2+x)-\frac{d+2}2$ is an integer plus 1/2,
and for $x\ge2$, $-0.25\le R_3(x)/P_3(x)<0$. If $d=4$ and $x>3$, then $\tfrac2{d+1}(x^2+x)-\frac{d+2}2$ is an integer or an integer
plus $t$ with $t\ge0.2$, while $-0.12\le R_4(x)/P_4(x)<0$. If $d=4$ and $x=3$, (\ref{feq}) says $s_2\ge 2.8-0.55$, while the hypothesis
says $s_2\ge 2.8$. These are, of course, equivalent.
\end{proof}

\begin{proof}[Proof of Lemma \ref{hardlem}] We begin by removing common factors in $P_d(x)$ and $P_{d+1}(x)$.
Since parity of $d$ plays a role, we let $d=2b+\eps$ with $\eps\in\{0,1\}$. When
considering $R_{2b+\eps}(x)$, we let, for $\delta\in\{0,1\}$,
$$\pt_{2b+\eps+\delta}(x):=\frac{P_{2b+\eps+\delta}(x)}{\prod_{i=0}^{b-1+\eps}(x-i)^2}.$$
Note that $\pt_{2b+\eps+1}(x)/\pt_{2b+\eps}(x)$ has the same quotient as $P_{2b+\eps+1}(x)/P_{2b+\eps}(x)$,
while its remainder $\rt_{2b+\eps}(x)$ satisfies $\rt_{2b+\eps}(x)=R_{2b+\eps}(x)/\prod_{i=0}^{b-1+\eps}(x-i)^2$, and hence $\rt_{2b+\eps}(x)/\pt_{2b+\eps}(x)=R_{2b+\eps}(x)/P_{2b+\eps}(x)$.

To prove the lemma, we will prove
\begin{enumerate}
\item $\pt_{2b+\eps}(x+b)>0$ for $x>0$,
\item $\rt_{2b+\eps}(x+b)<0$ for $x>0$, and
\item $\rt_{2b+\eps}(x+b)+\frac12\pt_{2b+\eps}(x+b)>0$ for $x>0$.
\end{enumerate}

We have, with $c_{i,b}=1$ unless $i=b$, while $c_{b,b}=\frac12$,
\begin{eqnarray*}&&\tfrac{(2b+\eps+\delta)!}2\pt_{2b+\eps+\delta}(x+b)\\
&=&\begin{cases}\sum\limits_{i=0}^b c_{i,b}\tbinom{2b}i\prod\limits_{j=1}^{b-i}(x-j+1)^2\prod\limits_{j=b-i+1}^b(x+j)^2&\eps+\delta=0\\
\sum\limits_{i=0}^b \tbinom{2b+1}i\prod\limits_{j=0}^{b-i}(x-j)^2\prod\limits_{j=b-i+1}^b(x+j)^2&\eps+\delta=1\\
\sum\limits_{i=0}^{b+1}c_{i,b+1} \tbinom{2b+2}i\prod\limits_{j=0}^{b-i}(x-j-1)^2\prod\limits_{j=b-i+1}^b(x+j)^2&\eps+\delta=2.\end{cases}
\end{eqnarray*}

Part (1) is true since $\pt_{2b+\eps}(x+b)$ is a sum of nonnegative terms including the term $\prod\limits_{j=1-\eps}^b(x+j)^2$, which is positive
for $x>0$.

Next we consider (2) with $\eps=1$. We compute $q_{2b+1}(x+b)=\frac1{b+1}(x^2-x)+\frac12$. We have
\begin{eqnarray*}\tfrac{(2b+2)!}2\rt_{2b+1}(x+b)&=&\tfrac{(2b+2)!}2(\pt_{2b+2}(x+b)-q_{2b+1}(x+b)\pt_{2b+1}(x+b))\\
&=&\sum_{i=0}^b\biggl(\prod_{j=1}^{b-i}(x-j)^2\prod_{j=b-i+1}^b(x+j)^2\biggr)F_i,
\end{eqnarray*}
where
\begin{eqnarray*}F_i&=&\tbinom{2b+2}i(x-b+i-1)^2-(2b+2)\tbinom{2b+1}i(\tfrac1{b+1}(x^2-x)+\tfrac12)+\tfrac12\tbinom{2b+2}{b+1}\delta_{i,b}x^2\\
&=&\biggl(\tbinom{2b+1}{i-1}-\tbinom{2b+1}i+\delta_{i,b}\tbinom{2b+1}b\biggr)x^2-2\biggl(\tbinom{2b+1}{i-1}+(b-i)\tbinom{2b+2}i\biggr)x\\
&&+\tbinom{2b+2}i(b+1-i)^2-(b+1)\tbinom{2b+1}i\\
&=&\begin{cases}\tbinom{2b+1}{i-1}(x-(b+1-i))^2-\tbinom{2b+1}i(x+b-i)^2+\tbinom{2b+1}i(2(b-i)^2+b-2i)&i<b\\
\tbinom{2b+1}{b-1}(x-1)^2-b\tbinom{2b+1}b&i=b.\end{cases}
\end{eqnarray*}
Here $\delta_{i,b}$ is the Kronecker delta.

The first term of (the last form of) $F_i$ and second term of $F_{i-1}$, when multiplied by the appropriate double products, cancel.
Thus we obtain
\begin{eqnarray}&&\tfrac{(2b+2)!}2\rt_{2b+1}(x+b)\label{terms}\\
&=&\sum_{i=0}^{b-1}\biggl(\prod_{j=1}^{b-i}(x-j)^2\prod_{j=b-i+1}^b(x+j)^2\biggr)
\tbinom{2b+1}i(2(b-i)^2+b-2i)\nonumber\\
&&-\biggl(\prod_{j=1}^b(x+j)^2\biggr)b\tbinom{2b+1}b.\nonumber
\end{eqnarray}
One can easily prove that
$$\sum_{i=0}^{b-1}\tbinom{2b+1}i(2(b-i)^2+b-2i)-b\tbinom{2b+1}b=0.$$
Also $2(b-i)^2+b-2i>0$ iff $i<b+\frac12-\frac12\sqrt{2b+1}$. Note also that the double products are increasing with $i$ for $x>0$.
Thus our expression for $\tfrac{(2b+2)!}2\rt_{2b+1}(x+b)$ is of the form $\sum\limits_{i=0}^b\a_i\b_i$ with $0\le\a_1\le\cdots<\a_b$,
$\sum\b_i=0$, and $\b_i>0$ iff $i<i_0$. Such a sum is negative.

The proof for (2) when $\eps=0$ is extremely similar. We have $q_{2b}(x+b)=\frac2{2b+1}x^2+\frac b{2b+1}$. Then
$${\tfrac{(2b+1)!}2}\rt_{2b}(x+b)=\sum_{i=0}^b\prod_{j=1}^{b-i}(x-j+1)^2\prod_{j=b-i+1}^b(x+j)^2\cdot F_i,$$
where now
\begin{eqnarray*}F_i&=&\tbinom{2b+1}i(x-b+i)^2-c_{i,b}\tbinom{2b}i(2x^2+b)\\
&=&\begin{cases}\tbinom{2b}{i-1}(x-(b-i))^2-\tbinom{2b}i(x+b-i)^2+\tbinom{2b}i(2(b-i)^2-b)&i<b\\
\tbinom{2b}{b-1}x^2-b\tbinom{2b-1}{b-1}&i=b.\end{cases}
\end{eqnarray*}
The rest of the argument follows exactly the same steps as in the last paragraph of the above proof of the case $\eps=1$,
using $\sum_{i=0}^{b-1}\binom{2b}i(2(b-i)^2-b)-b\binom{2b-1}{b-1}=0$.

The proof of (3) is essentially the same, except that we are subtracting 1/2 from $q_{2b+\eps}(x+b)$. The effect, when $\eps=1$, is to
add $(b+1)\binom{2b+1}i$ to $F_i$. The replacement for (\ref{terms}) is
\begin{eqnarray*}&&\tfrac{(2b+2)!}2(\rt_{2b+1}(x+b)+\tfrac12\pt_{2b+1}(x+b))\\
&=&\sum_{i=0}^{b-1}\biggl(\prod_{j=1}^{b-i}(x-j)^2\prod_{j=b-i+1}^b(x+j)^2\biggr)
\tbinom{2b+1}i(2(b-i)^2+2b-2i+1)\\
&&+\biggl(\prod_{j=1}^b(x+j)^2\biggr)\tbinom{2b+1}b
\end{eqnarray*}
which is clearly positive for $x>0$. The proof when $\eps=0$ is similar.
\end{proof}

\section{Conjectures about $R_d(x)/P_d(x)$}\label{sec3}
In Theorem \ref{rfnmt2}, we determined the precise value of our focal function $g(s_1,s_2)$ in terms of $R_d(s_1)/P_d(s_1)$.
In order to make this result useful, we need better information about the family of functions $R_d(x)/P_d(x)$.
In this section, we present several conjectures about this family of functions, one of which is supported by remarkable
patterns. See Table \ref{oddd} and Conjecture
\ref{oddconj}. We also discuss their implications.

We now state the simplest of these conjectures.
\begin{conj}\label{conj2} If $d\ge5$ and $x\ge\frac12(d+\sqrt{d+2})$, then $-\frac{R_d(x)}{P_d(x)}\ge\frac{.995(d-2)}{2x+d-2}$.
\end{conj}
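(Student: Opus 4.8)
The plan is to reduce Conjecture \ref{conj2} to an explicit polynomial inequality and verify it using the combinatorial apparatus of the proof of Lemma \ref{hardlem}. Write $d=2b+\eps$ with $\eps\in\{0,1\}$, pass to the reduced polynomials $\pt_d,\rt_d$ of that proof (so $\rt_d/\pt_d=R_d/P_d$, and $\pt_d(x+b)>0$ for $x>0$ by part (1) there), and substitute $x\mapsto x+b$; the hypothesis $x\ge\tfrac12(d+\sqrt{d+2})$ then reads $x\ge\tfrac\eps2+\tfrac12\sqrt{2b+\eps+2}$. As in that proof one has, for each parity, $\rt_{2b+\eps}(x+b)=c_1\sum_iD_i\gamma_i$ and $\pt_{2b+\eps}(x+b)=c_2\,p(x)\sum_iD_i\omega_i$, where $c_1,c_2>0$ are the relevant factorial constants, $p(x)$ is an explicit positive polynomial, the $D_i$ are the double products there (nonnegative, nondecreasing in $i$ for $x>0$, largest at $i=b$ with $D_b>0$), the weights $\omega_i$ are positive, and the weights $\gamma_i$ satisfy $\sum_i\gamma_i=0$ with $\gamma_i>0$ for $i$ below a threshold $i_0\approx b-\tfrac12\sqrt{2b}$ and $\gamma_i<0$ above it. The conjecture then becomes
$$-\sum_iD_i\gamma_i\ \ge\ K_b(x)\sum_iD_i\omega_i,\qquad K_b(x):=\frac{.995\,(d-2)}{2(x+b)+d-2}\cdot\frac{c_2\,p(x)}{c_1}.$$

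For the left side I would use $\sum_i\gamma_i=0$ to write $\sum_iD_i\gamma_i=\sum_i(D_i-D_k)\gamma_i$, where $k=\lceil i_0\rceil$ separates the signs; since $(D_i)$ is nondecreasing, every term $(D_i-D_k)\gamma_i$ is $\le0$, so retaining a convenient handful of them (say $i=k-1$ and $i=b$) yields a clean lower bound $-\sum_iD_i\gamma_i\ge(D_k-D_{k-1})\gamma_{k-1}+(D_b-D_k)|\gamma_b|$. For the right side I would bound $\sum_iD_i\omega_i$ above; the crude $\le D_b\sum_i\omega_i$ is too wasteful near the threshold, so one must exploit that $D_i/D_b$ is a product of factors $(x-j)^2/(x+j)^2$ and hence decays quickly as $i$ recedes from $b$, giving $\sum_iD_i\omega_i\le D_b\bigl(\sum_i\omega_i-E_b(x)\bigr)$ for an explicit positive $E_b(x)$. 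After dividing, the target becomes an explicit relation among $x$, $b$, binomial sums, and the ratios $D_i/D_b$, which I would verify for all $d\ge5$ and all $x$ in the hypothesis range by splitting into a ``large $x$'' regime ($x\gtrsim b$), where the $D_i$ are uniformly comparable to $D_b$ and coarse estimates suffice, and a ``near-threshold'' regime, $\sqrt b\lesssim x\lesssim b$, where the decay of the $D_i/D_b$ must be tracked carefully. (A convenient closed-form anchor --- though below the hypothesis range --- is shifted $x=\tfrac\eps2$: there the $D_i$ take especially simple shapes, e.g.\ $D_{b-m}/D_b=1/(2m+1)^2$ at $x_{\mathrm{shift}}=\tfrac12$ when $\eps=1$, so $-R_d/P_d$ becomes an explicit ratio of truncated sums estimable via $\sum_m 1/(2m+1)^2=\tfrac{\pi^2}{8}$; this exhibits $-R_d/P_d\to\tfrac12$ as $d\to\infty$ at $x=d/2$, consistent with Lemma \ref{hardlem}, and explains why so little slack is available.)

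The main obstacle is uniformity in $d$ together with the sharpness of the constant. For $x$ large the inequality is far from tight: $-R_d(x)/P_d(x)$ is asymptotic to $c_d/x$ for a constant $c_d$ which one can read off after carrying the expansion (\ref{3}) one term further (so as to get the $x^{2d-1}$-coefficient of $R_d$), and one expects $2c_d>d-2$, so $-R_d/P_d>\tfrac{d-2}{2x}\ge\tfrac{d-2}{2x+d-2}$ comfortably there --- the $.995$ is plainly irrelevant for large $x$. Likewise near $x=d/2$ one has $-R_d/P_d\approx\tfrac12>\tfrac{d-2}{2(d-1)}$. The $.995$ is therefore needed, if at all, only in a bounded window of $(d,x)$ --- near the lower end of the range for the smallest $d$, where $-R_d/P_d$ sits only a hair above $\tfrac{d-2}{2x+d-2}$ --- and the delicate part is to make the rearrangement lower bound for $-\sum D_i\gamma_i$ and the concentration upper bound for $\sum D_i\omega_i$ simultaneously sharp enough to cover that window uniformly; I expect the finitely many smallest $d$ (say $d\le d_0$ for some explicit $d_0$, past which the estimates have margin to spare) to require direct numerical confirmation. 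A useful hedge is first to prove the cleaner statement that $\varphi_d(x):=(2x+d-2)\bigl(-R_d(x)/P_d(x)\bigr)$ has its infimum over the hypothesis range attained at (or near) the endpoint $x=\tfrac12(d+\sqrt{d+2})$, reducing everything to a one-parameter estimate there. The even case $\eps=0$ runs in parallel, with $\tbinom{2b}i$ and weights $2(b-i)^2-b$ in place of their odd analogues, and with $\sum_{i<b}\tbinom{2b}i(2(b-i)^2-b)=b\tbinom{2b-1}{b-1}$ replacing $\sum_{i<b}\tbinom{2b+1}i(2(b-i)^2+b-2i)=b\tbinom{2b+1}b$.
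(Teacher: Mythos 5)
There is no proof in the paper to compare against: the statement you are addressing is Conjecture \ref{conj2}, which the paper explicitly leaves open. Its only support is numerical (Table \ref{tbl3}), the constant $.995$ is described as the largest three-decimal constant for which the inequality \emph{appears} to hold, and the author states outright that he cannot even prove the unimodality of $Q_d$ on which the surrounding discussion rests. So the relevant question is whether your proposal closes the gap, and it does not; it is a strategy with the decisive steps deferred.

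Concretely: (a) Your lower bound for $-\sum_iD_i\gamma_i$, obtained by telescoping against $D_k$ and then \emph{retaining only the terms $i=k-1$ and $i=b$}, discards almost the entire positive part of the sum. But the inequality has essentially no slack: writing $-R_d/P_d=(1-Q_d(x))\tfrac{d-2}{2x+d-2}$, Table \ref{tbl3} shows $Q_d$ at the endpoint $x=\tfrac12(d+\sqrt{d+2})$ climbing to about $0.004045$ at $d=27$ against the required ceiling $0.005$, so any admissible estimate must be accurate to within roughly one part in a thousand, uniformly in $d$. A two-term retention cannot achieve that, and you acknowledge as much by relegating the ``near-threshold regime'' and the small-$d$ cases to unspecified careful tracking and ``direct numerical confirmation'' --- which is exactly where the whole difficulty lives. (b) Your proposed hedge, that $\varphi_d(x)=(2x+d-2)(-R_d(x)/P_d(x))$ attains its infimum over the hypothesis range at the endpoint, is equivalent (since $\varphi_d=(d-2)(1-Q_d)$) to showing $Q_d$ is maximized there, i.e.\ to the unimodality statement of Conjecture \ref{oddconj}; you would be assuming one open conjecture of the paper to prove another. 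What your write-up does get right is the bookkeeping: the decomposition of $\rt_d$ and $\pt_d$ into double products with weights summing to zero matches the proof of Lemma \ref{hardlem}, the sign-change threshold $i_0\approx b-\tfrac12\sqrt{2b}$ is correct, and the large-$x$ asymptotic check is consistent with (\ref{6}). But those ingredients only reproduce the paper's proof that $-\tfrac12<R_d/P_d<0$; sharpening $0$ to $\tfrac{.995(d-2)}{2x+d-2}$ is the entire content of the conjecture, and that step remains unexecuted.
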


The implication of this conjecture is given by the following theorem.
\begin{thm}\label{rfnmt} Assume Conjecture \ref{conj2} and $s_2>\frac12(\sqrt{8s_1+9}-1)$. Let
$$d_0=\biggl[\sqrt{4s_1^2+4s_1+(s_2+\tfrac12)^2}-s_2-\tfrac32\biggr]\ge5.$$ Then (\ref{1}) is true with
$\delta=1$  if
\begin{equation}\label{cond} 0<\tfrac2{d_0+1}(s_1^2+s_1)-\tfrac{d_0+2}2-s_2\le\frac{0.995(d_0-2)}{2s_1+d_0-2}.\end{equation}
\end{thm}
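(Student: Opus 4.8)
The plan is to mirror the proof of Theorem \ref{rfnmt2} almost verbatim, replacing the exact quantity $-R_{d_0}(s_1)/P_{d_0}(s_1)$ by the explicit lower bound $0.995(d_0-2)/(2s_1+d_0-2)$ supplied by Conjecture \ref{conj2}, and checking that the one direction of the iff in \ref{rfnmt2} that we still need goes through. First I would record the preliminary observation that the hypothesis $s_2>\frac12(\sqrt{8s_1+9}-1)$ is exactly the complement of the hypothesis of Proposition \ref{easier}(a), and is what forces $d_0\ge5$; this is needed so that Conjecture \ref{conj2} applies at $d=d_0$. I would also note that the side condition $x\ge\frac12(d_0+\sqrt{d_0+2})$ required by Conjecture \ref{conj2} holds here: with $s_1=x$ and $d_0$ as defined, one has $2s_1-d_0 = g(s_1,s_2)+O(1)\ge 0$ comfortably in the relevant range (indeed $s_2\le 2s_1$ region), so $2x>d_0$, and a short estimate upgrades this to $x\ge\frac12(d_0+\sqrt{d_0+2})$; alternatively this can be folded into the hypotheses as in \ref{conj2}.

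Next, set $d_1=\sqrt{4s_1^2+4s_1+(s_2+\tfrac12)^2}-s_2-\tfrac32$, so that $s_2=\frac2{d_1+1}(s_1^2+s_1)-\frac{d_1+2}2$ and $d_0=[d_1]$. Exactly as in the proof of Theorem \ref{rfnmt2}, $\delta=1$ in (\ref{1}) holds iff the RHS of (\ref{feq}) is satisfied using $d=d_0$, i.e. iff
\begin{equation*}0<\tfrac2{d_0+1}(s_1^2+s_1)-\tfrac{d_0+2}2-s_2\le-\tfrac{R_{d_0}(s_1)}{P_{d_0}(s_1)},\end{equation*}
where the strict inequality on the left is automatic because the middle expression vanishes at $d_1$ and is strictly decreasing in $d$ while $d_0\le d_1$ (and $d_0<d_1$ unless $d_1$ is an integer, in which case the argument of Theorem \ref{rfnmt2} gives $\delta=0$ and (\ref{cond}) likewise fails its strict left inequality, so there is nothing to prove). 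Now invoke Conjecture \ref{conj2}: since $d_0\ge5$ and the side condition holds, $-R_{d_0}(s_1)/P_{d_0}(s_1)\ge 0.995(d_0-2)/(2s_1+d_0-2)$. Hence if (\ref{cond}) holds, then the middle expression is $\le 0.995(d_0-2)/(2s_1+d_0-2)\le -R_{d_0}(s_1)/P_{d_0}(s_1)$, so the $\le$-part of (\ref{cond2}) of Theorem \ref{rfnmt2} is satisfied, and the $(>0)$-part is satisfied by hypothesis; therefore $\delta=1$ by Theorem \ref{rfnmt2}. This is exactly the "if" direction asserted in Theorem \ref{rfnmt}.

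The genuinely routine parts are the reduction above; the only place requiring care is verifying that the side hypothesis $x\ge\frac12(d_0+\sqrt{d_0+2})$ of Conjecture \ref{conj2} is met, and that $d_0\ge5$, from the stated hypothesis $s_2>\frac12(\sqrt{8s_1+9}-1)$. For the latter, note that $s_2>\frac12(\sqrt{8s_1+9}-1)$ is precisely the condition under which the bracketed expression in (\ref{1}) strictly exceeds $s_2$ (Proposition \ref{easier}(a)), equivalently $d_1 < -s_2-\tfrac32+\sqrt{4s_1^2+4s_1+(s_2+\tfrac12)^2}$ is... rather, one checks directly that this inequality on $s_2$ is equivalent to $2s_1-d_1 > s_2$, hence to $d_1 < 2s_1-s_2$, and then a monotonicity/threshold computation in $s_2$ (the boundary case $s_2=\frac12(\sqrt{8s_1+9}-1)$ gives $d_1$ just above $4$, so $d_0=[d_1]\ge4$, and strictness plus integrality push it to $d_0\ge5$) yields the claim; this is the only step where I anticipate having to be slightly delicate with inequalities rather than purely formal, and it is where I would spend the most care.

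I expect no substantial obstacle: granting Conjecture \ref{conj2}, Theorem \ref{rfnmt} is a direct corollary of Theorem \ref{rfnmt2} together with the elementary verification that the hypotheses of \ref{conj2} apply at $d=d_0$. The main "hard part," such as it is, is bookkeeping: making sure the strict-versus-weak inequalities line up (so that the degenerate case $d_1\in\Z$ is handled) and confirming the auxiliary range conditions $d_0\ge5$ and $s_1\ge\frac12(d_0+\sqrt{d_0+2})$ from the standing hypothesis on $s_2$. All of this is short, and the bulk of the proof is a one-line substitution into the chain of inequalities of Theorem \ref{rfnmt2}.
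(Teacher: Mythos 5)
Your overall reduction is the same as the paper's: granting Conjecture \ref{conj2}, the statement follows from (\ref{feq}) (equivalently, from the $\le$-part of (\ref{cond2}) in Theorem \ref{rfnmt2}) once one checks that Conjecture \ref{conj2} actually applies at $d=d_0$, $x=s_1$. Your treatment of the degenerate case $d_1\in\Z$ and of the strict left-hand inequality in (\ref{cond}) is also correct. The genuine gap is in the one step you yourself flag as needing care: the verification of the side condition $s_1\ge\frac12(d_0+\sqrt{d_0+2})$. You assert that $2s_1>d_0$ holds ``comfortably'' and that ``a short estimate upgrades this'' to the needed inequality, but $\frac12(d_0+\sqrt{d_0+2})$ exceeds $d_0/2$ by $\frac12\sqrt{d_0+2}$, which grows with $d_0$, so no such upgrade is automatic.

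Worse, your account of what the hypothesis $s_2>\frac12(\sqrt{8s_1+9}-1)$ is for is wrong: you claim it forces $d_0\ge5$ because ``the boundary case $s_2=\frac12(\sqrt{8s_1+9}-1)$ gives $d_1$ just above $4$.'' In fact $d_1=\sqrt{4s_1^2+4s_1+(s_2+\frac12)^2}-s_2-\frac32$ is a decreasing function of $s_2$, and at that boundary it equals $2s_1+\frac12-\sqrt{2s_1+\frac94}$, which is large (about $24.8$ when $s_1=15$); the hypothesis on $s_2$ therefore gives an \emph{upper} bound on $d_1$, not a lower one, and $d_0\ge5$ must be read as a separate hypothesis in the statement. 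The actual role of $s_2>\frac12(\sqrt{8s_1+9}-1)$ is precisely to supply the side condition you left unverified: since $d_1$ decreases in $s_2$, the hypothesis gives $d_0<d_1<2s_1+\frac12-\sqrt{2s_1+\frac94}$, and solving $d=2s_1+\frac12-\sqrt{2s_1+\frac94}$ for $s_1$ yields exactly $s_1=\frac12(d+\sqrt{d+2})$; monotonicity of $\frac12(d+\sqrt{d+2})$ in $d$ then gives $s_1>\frac12(d_0+\sqrt{d_0+2})$. This computation is the entire nonformal content of the paper's proof, and it is missing from (indeed contradicted by) your sketch.
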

For $s_1\le38$, the only cases in which (\ref{1}) is true with $\delta=1$ which are missed by this theorem are $(s_1,s_2)=(6,4)$, $(18,56)$, $(36,16)$,
and $(38,155)$. The significance of the 0.995 in \ref{conj2} is that it is, to three decimal places, the largest number for which the inequality appears to be true.

\begin{proof}[Proof of Theorem \ref{rfnmt}]
Let $d=\sqrt{4s_1^2+4s_1+(s_2+\tfrac12)^2}-s_2-\tfrac32$. The theorem is vacuously true if $d_0=d$ because $\frac2{d+1}(s_1^2+s_1)-\frac{d+2}2-s_2=0$.
So we assume $d$ is not an integer. Then $2s_1-d_0=[2s_1-d]+1$, and so, using (\ref{feq}), the assertion  that (\ref{1}) is true with $\delta=1$
can be stated as
$$s_2\ge\tfrac2{d_0+1}(s_1^2+s_1)-\tfrac{d_0+2}2+\tfrac{R_{d_0}(s_1)}{P_{d_0}(s_1)}.$$
This will follow from (\ref{cond}) and our assumption of Conjecture \ref{conj2} once we know that $s_1\ge\frac12(d_0+\sqrt{d_0+2})$.

Since $\sqrt{4s_1^2+4s_1+(s_2+\tfrac12)^2}-s_2-\tfrac32$ is a decreasing function of $s_2$, and $d_0<d$, it suffices to prove $s_1\ge\frac12(d+\sqrt{d+2})$
if
\begin{eqnarray*}d&=&\sqrt{4s_1^2+4s_1+(\tfrac12\sqrt{8s_1+9})^2}-\tfrac12\sqrt{8s_1+9}-1\\
&=&2s_1+\tfrac12-\sqrt{2s_1+\tfrac94}.\end{eqnarray*}
Solving the latter equation for $s_1$ yields exactly $s_1=\frac12(d+\sqrt{d+2})$.
\end{proof}

Extensive {\tt Maple} calculation led the author to expect that, for $d\ge5$ and $x>d/2$,
\begin{equation}\label{5}\frac{R_d(x)}{P_d(x)}\approx \frac{-(d-2)}{2x+d-2}.\end{equation}

To understand how good is the approximation (\ref{5}), we consider the ratio of the two sides, using the reduced
versions $\rt$ and $\pt$.
As this will be close to 1, we study
\begin{eqnarray}Q_d(x)&:=&1-\frac{\rt_d(x)/\pt_d(x)}{-(d-2)/(2x+d-2)}\nonumber\\
&=&\frac{(d-2)\pt_d(x)+(\pt_{d+1}(x)-q_d(x)\pt_d(x))(2x+d-2)}{(d-2)\pt_d(x)},\label{Q_d}\end{eqnarray}
where $q_d(x)=\tfrac 2{d+1}x^2-\tfrac{2d}{d+1}x+\tfrac d2$ is the quotient in (\ref{4}).
We would like to prove that $Q_d(x)\approx0$ in some sense, when $x>d/2$.

Similarly to the methods in deriving (\ref{3}), we can show that
\begin{equation}\label{6}\lim_{x\to\pm\infty}Q_d(x)=\tfrac{-2}{(d+1)(d-2)}.\end{equation}
To see this, note that the numerator and denominator of (\ref{Q_d}) are both polynomials of degree $2[\frac d2]$.
The desired limit in (\ref{6}) is the ratio of their leading coefficients.
We omit the details in this computation.

We begin by considering $Q_{25}(x)$. It is a ratio of two polynomials of degree 24.  {\tt Maple} computes
that the derivative of $Q_{25}(x)$ is 0 only at $x\approx4.0409$ and $60.50336$. Moreover,
{\tt Maple} plots the graph of $Q_{25}(x)$, which turns out to look something like the
rough sketch in the top half of Figure \ref{Q245graph}. This sketch is not at all to scale.
We are particularly interested in the values for $x\ge13$. As $x$ increases from $13$ to $60.5$,
$Q_{25}(x)$ decreases from $0.01672$ to $-0.006867$.

\begin{fig}
\label{Q245graph}
\begin{center}
\includegraphics[width=6in,height=6in]{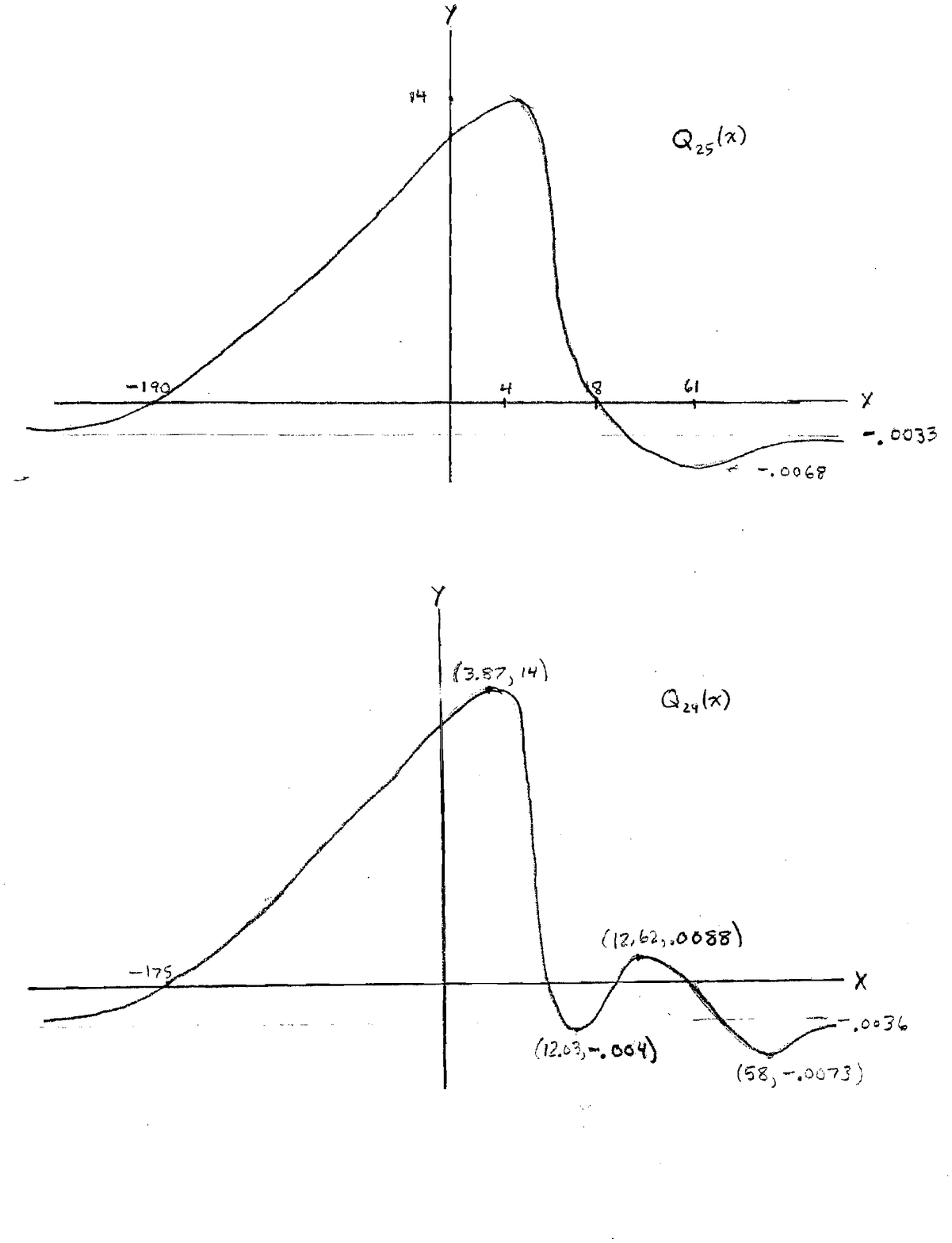}
\end{center}
\end{fig}

The amazing observation is that, for all odd $d$, $Q_d(x)$ apparently has a form very similar to that
in the top half of Figure \ref{Q245graph}, with only one local maximum and one local minimum, which are the
absolute maximum and minimum. Note that $Q_d(x)$ is a ratio of two
polynomials of degree $d-1$, yet it apparently has this simple form for all odd $d$.

In Table \ref{oddd}, we tabulate for odd $d$ satisfying $5\le d\le 61$, the values, $\xmax$ and $\xmin$, of $x$
where the derivative $Q_d'(x)$ equals $0$, the values of $Q_d(x)$ at these points, which will be absolute maximum and minimum values,
and the limiting value $Q_{\text{lim}}$ of $Q(x)$ as $x\to\pm\infty$.

\begin{scriptsize}
\begin{table}
\begin{center}
\caption{Max, min, and lim of $Q_{d}(x)$ when $d$ is odd}
\label{oddd}
\begin{tabular}{c|ccccc}
$d$&$\xmax$&$Q_d(\xmax)$&$\xmin$&$Q_d(\xmin)$&$Q_{\text{lim}}$\\
\hline

  $5$&$ 0.6874697648$&$ 3.069437405$&$ 10.48133802$&$ -0.1360199330$&$-0.1111111111$\\
  $7$&$ 1.044428415$&$ 4.216907295$&$ 15.48679761$&$ -0.06528852351$&$-0.05000000000$\\
  $9$&$ 1.371314912$&$ 5.374255987$&$ 20.49141428$&$ -0.03966514187$&$ -0.02857142857$\\
  $11$&$ 1.706180162$&$ 6.540442715$&$ 25.49465519$&$ -0.02724322575$&$-0.01851851852$\\
$13$&$ 2.040065994$&$ 7.712042960$&$ 30.49698936$&$ -0.02018382313$&$ -0.01298701299$\\
$15$&$ 2.373641870$&$ 8.886960397$&$ 35.49873537$&$ -0.01574295269$&$ -0.009615384615$\\
$17$&$ 2.707177063$&$ 10.06413875$&$ 40.50008573$&$ -0.01274397486$&$-0.007407407407$\\
$19$&$ 3.040658416$&$ 11.24291343$&$ 45.50115931$&$ -0.01060966757$&$  -0.005882352941$\\
$21$&$ 3.374105921$&$ 12.42285407$&$ 50.50203244$&$ -0.009028116216$&$ -0.004784688995$\\
$23$&$ 3.707530371$&$ 13.60364584$&$ 55.50275603$&$ -0.007817967242$&$  -0.003968253968$\\
$25$&$ 4.040938032$&$ 14.78513590$&$ 60.50336525$&$ -0.006867535404$&$   -0.003344481605$\\
$27$&$ 4.374333149$&$ 15.96714209$&$ 65.50388509$&$ -0.006104769071$&$  -0.002857142857$\\
$29$&$ 4.707718646$&$ 17.14958715$&$ 70.50433380$&$ -0.005481366771$&$ -0.002469135802$\\
 $ 31$&$ 5.041096602$&$ 18.33236558$&$ 75.50472499$&$-0.004963889096$&$ -0.002155172414$\\
  $33$&$ 5.374468538$&$19.51542724$&$ 80.50506904$&$ -0.004528540629$&$ -0.001897533207$\\
  $35$&$ 5.707835592$&$ 20.69872494$&$ 85.50537395$&$-0.004157982647$&$ -0.001683501683$\\
 $37$&$ 6.041198635$&$ 21.88222143$&$90.50564603$&$ -0.003839317503$&$ -0.001503759398$\\

  $39$&$ 6.374558339$&$ 23.06588687$&$ 95.50589030$&$ -0.003562775224$&$ -0.001351351351$\\
$41$&$ 6.707915237$&$ 24.24969671$&$ 100.5061108$&$  -0.003320834926$&$ -0.001221001221$\\
        $43$&$ 7.041269754$&$ 25.43363246$&$ 105.5063109$&$ -0.003107623257$&$ -0.001108647450$\\

$45$&$ 7.374622231$&$ 26.61767652$&$ 110.5064932$&$  -0.002918493910$&$ -0.00101112234$\\

    $47$&$ 7.707972951$&$27.80181575$&$  115.5066600$&$ -0.002749728194$&$ -0.000925925926$\\

  $49$&$ 8.041322143$&$ 28.98603869$&$ 120.5068132$&$-0.002598318198$&$ -0.00085106383$\\
  $51$&$ 8.374670000$&$ 30.17033570$&$  125.5069545$&$-0.002461807379$&$-0.00078492936$\\
  $53$&$ 8.708016684$&$ 31.35469854$&$  130.5070851$&$-0.002338171717$&$-0.00072621641$\\
  $55$&$ 9.041362332$&$ 32.53912015$&$  135.5072062$&$-0.002225729996$&$-0.00067385445$\\
  $57$&$ 9.374707059$&$ 33.72359445$&$  140.5073188$&$-0.002123075281$&$-0.00062695925$\\
  $59$&$ 9.708050964$&$ 34.90811616$&$  145.5074238$&$-0.002029022010$&$-0.00058479532$\\
  $61$&$10.041394134$&$ 36.09268071$&$  150.5075219$&$-0.001942564749$&$-0.00054674686$

\end{tabular}
\end{center}
\end{table}
\end{scriptsize}

When $d$ is even, the functions $Q_d$  fall into almost the same pattern, except that they have an additional wiggle
between $\frac d2$ and $\frac d2+1$. For example, a schematic graph of $Q_{24}(x)$ is given
in the bottom half of Figure \ref{Q245graph}. Note that the graph is drawn wildly out of scale. Similarly to
$Q_{25}$, it has an absolute maximum at $x=3.87$ and an absolute minimum at $x=58.003$.
But instead of decreasing steadily between these, it has an additional single local minimum and local
maximum which occur between $x=12$ and 13. {\tt Maple} calculations strongly suggest that for all
even $d$, the graph of $Q_d$ will have a form similar to that in the bottom half of Figure \ref{Q245graph},
and that the positions and values of the maxima and minima will have patterns extremely similar to
those for odd $d$ in Table \ref{oddd}.  We will not pursue those here, as we prefer to concentrate on
the simpler situation when $d$ is odd.

The reader will immediately be struck by the pattern in Table \ref{oddd}, which seems especially striking for $\xmin(d)$.
We have extended these calculations through $d=151$, using 80 digits of accuracy in {\tt Maple}. Then for $k=3,\ldots,10$,
we have found the real numbers $c_0,\ldots,c_k$ which satisfy
\begin{equation}\label{cs}\xmin(d)={\tfrac52}d-2+\sum_{i=0}^k\frac{c_i}{(d-1)^i},\qquad d=51,\, 61,\ldots,51+10k.\end{equation}
Each $c_i$ seems to stabilize as $k$ increases in (\ref{cs}). Moreover, using the formula (\ref{cs}) for $\xmin(d)$ derived using just a few values
of $d$
gives agreement with computed values of $\xmin(d)$ for all odd values of $d$ to an increasing number of decimal places as $k$ increases.
In addition, we have, with $k=10$, $c_0$ equals, to 19 decimal places, $.0104166666666666666\approx 1/96$. 

\begin{conj}\label{oddconj} For odd $d\ge5$, there are numbers $\xmax$ and $\xmin$ such that
$Q_d(x)$ is decreasing for $\xmax\le x\le\xmin$, and increasing elsewhere. There are real numbers $c_i$ for $i\ge1$ such that
$$\xmin(d)={\tfrac52}d-2+{\tfrac1{96}}+\sum_{i=1}^\infty\frac{c_i}{(d-1)^i}.$$
The initial digits of $c_1,\ldots,c_6$ are $-.176504629629629$, $.16562740498$, $.20004439$, $.291872$,  $.3215$, and $.28$.
\end{conj}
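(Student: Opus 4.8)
We only sketch a possible approach; Conjecture \ref{oddconj} is not proved here.

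The first move is to reduce both assertions to statements about a single polynomial. By (\ref{Q_d}) and $\pt_{d+1}-q_d\pt_d=\rt_d$ we have $Q_d(x)=1+\frac{(2x+d-2)\rt_d(x)}{(d-2)\pt_d(x)}$, so $Q_d'(x)=N_d(x)/\pt_d(x)^2$ with
$$N_d(x)=2\rt_d(x)\pt_d(x)+(2x+d-2)\bigl(\rt_d'(x)\pt_d(x)-\rt_d(x)\pt_d'(x)\bigr).$$
When $d=2b+1$ is odd, $\deg\pt_d=d-1$, $\deg\rt_d\le d-2$, and the top coefficients of $N_d$ cancel, so $\deg N_d\le 2d-4$. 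Moreover the representation of $\pt_{2b+1}(x+b)$ obtained in the proof of Lemma \ref{hardlem} exhibits it as a positive combination of products of squares whose $i=0$ and $i=b$ terms vanish only on the disjoint sets $\{1,\dots,b\}$ and $\{-1,\dots,-b\}$; hence $\pt_{2b+1}>0$ on all of $\R$. Thus $Q_d$ is smooth on $\R$, $\operatorname{sign}Q_d'=\operatorname{sign}N_d$, and the monotonicity half of the conjecture is exactly that $N_d$ has precisely two sign changes on $\R$ for each odd $d\ge5$.

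To prove that, the plan is to use the same sum-of-squares formulas for $\pt_{2b+\eps+\delta}(x+b)$ together with the remainder formula (\ref{terms}) for $\rt_{2b+1}(x+b)$. Their leading behaviour in $x$ is transparent, and should pin down the two real roots — a relatively small one $\xmax$ and one $\xmin\approx\tfrac52 d$ — and show that $N_d$ is eventually positive on each ray. The remaining, genuinely finite, task is to rule out any further real roots of $N_d$ in the bounded window around $x=d/2$, where $Q_d$ oscillates most; this could be done by a Sturm-sequence computation performed symbolically in $b$, or by sign estimates in the spirit of those already used to prove Lemma \ref{hardlem} and Conjecture \ref{conj2}. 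Once the sign pattern of $N_d$ is known, identifying $Q_d(\xmax)$ and $Q_d(\xmin)$ as the absolute maximum and minimum is immediate from (\ref{6}) together with $Q_d(\xmax)>0>Q_d(\xmin)$.

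For the expansion of $\xmin(d)$, I would set $m=d-1$, write $x=\tfrac52 m+\tfrac12+\tfrac1{96}+\xi$ with $\xi=O(1/m)$, substitute into $N_d(x)=0$, and solve recursively for the coefficients of the successive powers of $1/m$. This requires an asymptotic expansion, valid as $d\to\infty$ with $x\sim\tfrac52 d$ and accurate to as many orders in $1/d$ as there are conjectured constants, of $\rt_d(x)/\pt_d(x)=R_d(x)/P_d(x)$. The natural tool is a Laplace/saddle-point analysis of the combinatorial sums defining $P_d$ and $P_{d+1}$: with $i=\beta b$ and $x=\gamma b$, each summand has the form $\exp\bigl(b\,\Phi_\gamma(\beta)+o(b)\bigr)$ after Stirling, and the value $\tfrac52 d$ should emerge as the $\gamma$ for which the leading critical-point equation for $Q_d$ is satisfied. (Alternatively one may start from the identity $P_d(n)=(n!)^2\,[t^{\,d-2n}]\,L_n(-1/t)^2$ valid for integer $x=n$, where $L_n$ is the Laguerre polynomial, and invoke Plancherel--Rotach-type asymptotics.) Matching enough orders should recover the constant $\tfrac1{96}$ and, in principle, the exact values of $c_1,\dots,c_6$; the first appears to be the rational number $c_1=-\tfrac{305}{1728}$.

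The main obstacle is unquestionably this last step: producing a uniform-in-$(x,d)$ asymptotic expansion of $P_{d+1}(x)/P_d(x)$, hence of $R_d(x)/P_d(x)$, with rigorous error control to high order. Saddle-point expansions can be pushed to any order in principle, but one first has to verify that the saddle in the $\beta$-variable is simple, isolated and interior for all $\gamma$ near $5$, and that endpoint and tail contributions are negligible to the required precision; the clean value $\tfrac1{96}$ and the apparently rational $c_1$ suggest that the expansion is well behaved, but converting the numerically observed coefficients into proved closed forms looks delicate. A second, qualitatively different difficulty is that the monotonicity statement is not asymptotic — it must hold for every odd $d\ge5$ — so it genuinely needs effective, not merely asymptotic, control of $N_d$ on the bounded interval near $x=d/2$.
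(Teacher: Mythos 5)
This statement is a conjecture, and the paper contains no proof of it: the text explicitly says ``we cannot prove that $Q_d(x)$ for odd $d$ has a unique maximum and minimum,'' and the sole support offered is {\tt Maple} computation of the critical points of $Q_d$ for odd $d\le151$ together with numerical fitting of the coefficients in (\ref{cs}). You correctly recognize this and are upfront that you are offering a programme rather than a proof, so there is no question of your matching or diverging from an argument in the paper --- there is none to match. Your preliminary reduction is sound and goes slightly beyond what the paper records: writing $Q_d'=N_d/\pt_d^2$ and checking that the top-degree terms of $2\rt_d\pt_d$ and $2x(\rt_d'\pt_d-\rt_d\pt_d')$ cancel (so $\deg N_d\le 2d-4$) is correct, and it cleanly converts the monotonicity half of the conjecture into ``$N_d$ has exactly two real sign changes.'' One small inaccuracy: in the paper's displayed formula for the case $\eps+\delta=1$, \emph{every} summand of $\tfrac{(2b+1)!}2\pt_{2b+1}(x+b)$ carries the factor $(x-0)^2$, so the $i=0$ and $i=b$ terms do \emph{not} vanish on disjoint sets as you assert; that formula is normalized for the even-$d$ context ($\eps=0$, $\delta=1$). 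For the odd-$d$ normalization one divides out one further factor $(x-b)^2$, after which your disjoint-zero-set argument for strict positivity of $\pt_{2b+1}$ does go through (e.g.\ $\pt_3(x)=\tfrac43(x^2-x+1)>0$).

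The genuine gaps are exactly the two you name, and they are the whole problem. First, the claim that $N_d$ has precisely two real roots must hold for \emph{every} odd $d\ge5$, not just asymptotically; a Sturm-sequence computation ``performed symbolically in $b$'' is not a known technique for a polynomial family whose degree grows with $b$, and no effective bound on the number of roots in the window near $x=d/2$ is supplied. Second, the expansion of $\xmin(d)$ requires a rigorous asymptotic expansion of $R_d(x)/P_d(x)$, uniform for $x\sim\tfrac52d$, to at least seven orders in $1/d$, together with closed forms for the coefficients; the saddle-point/Plancherel--Rotach route you suggest is the natural one, but nothing is verified (simplicity and interiority of the saddle, error control, and the passage from numerical to exact rational coefficients such as $\tfrac1{96}$ and $-\tfrac{305}{1728}$ are all asserted rather than established). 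As it stands your submission, like the paper's Table \ref{oddd}, is evidence and a plausible plan, not a proof of Conjecture \ref{oddconj}.
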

The 3-digit repetend in $c_1$ leads one to guess that $c_1=-305/12^3$ and that the $c_i$ are all rational numbers.

We have performed similar analyses for $\xmax$ and $Q_d(\xmin)$. The initial terms are apparently $\xmax=\frac d6-\frac18-\frac1{64(d-1)}$ and
$Q_d(\xmin)=-\frac1{12(d-1)}$. The series for $Q_d(\xmin)$ seems  to converge more slowly than the others.

We wish to emphasize that we cannot prove that $Q_d(x)$ for odd $d$ has a unique maximum and minimum. This is all based
on {\tt Maple} calculations obtained by setting its derivative equal to 0, where $Q_d$ is a ratio of two polynomials of degree
$d-1$.

Conjecture \ref{conj2} is equivalent to saying that $Q_d(x)\le .005$ for $x\ge\frac12(d+\sqrt{d+2})$. This latter statement
would follow from Conjecture \ref{oddconj} expanded to include a formula for $Q_d(\xmin)$ and to
include even values of $d$, together with a proof that $Q_d(\frac12(d+\sqrt{d+2}))\le.005$. Some justification for
this conjecture is given by Table \ref{tbl3}, which also shows why we use .005.

\begin{table}
\begin{center}
\caption{Evidence for Conjecture \ref{conj2}}
\label{tbl3}
\begin{tabular}{r|l}
$d$&$Q_d(\frac12(d+\sqrt{d+2}))$\\
\hline
                       $ 5$&$ -.08632297$\\

                        $6$&$ -.0563567189$\\

                        $7$&$ -.031250000$\\

                       $ 8$&$ -.021426047$\\

                        $9$&$ -.012806353$\\

                       $10$&$ -.008467636$\\

                       $11$&$ -.004769088$\\

                       $12$&$ -.002577860$\\

                      $13$&$ -.000764689$\\

                       $14$&$ 0.0004262575$\\

                       $15$&$ 0.001391319$\\

                       $16$&$ 0.002066212$\\

                       $17$&$ 0.002604694$\\

                       $18$&$ 0.002994356$\\

                       $19$&$ 0.003300396$\\

                       $20$&$ 0.00352431$\\

                       $21$&$ 0.00369610$\\

                       $22$&$ 0.00382002$\\

                       $23$&$ 0.00391095$\\

                       $24$&$ 0.00397283$\\

                       $25$&$ 0.00401357$\\

                       $26$&$ 0.00403620$\\

                       $27$&$ 0.004045174$\\

                       $28$&$ 0.004042664$\\

                       $29$&$ 0.0040313148$\\

                       $30$&$ 0.004012644$\\
                       $31$&$0.003988285$
\end{tabular}
\end{center}
\end{table}

There is one value of $Q_d(x)$, occurring just before the crucial range $x>d/2$, for which the value of
$Q_d(x)$ is easily determined. This is given in the following result, whose easy proof we omit.
\begin{prop} If $d$ is odd, then $Q_d(\frac{d-1}2)=1$. If $d$ is even, then $Q_d(\frac d2)=\frac{-2}{(d+1)(d-2)}$.
\end{prop}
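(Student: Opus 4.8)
The plan is to rewrite $Q_d$ into something one can evaluate directly. Since $\rt_d/\pt_d=R_d/P_d$ and $R_d=P_{d+1}-q_dP_d$, with $q_d(x)=\tfrac2{d+1}x^2-\tfrac{2d}{d+1}x+\tfrac d2$ the quotient of (\ref{4}), one has, as an identity of rational functions,
$$Q_d(x)=1-\frac{\rt_d(x)/\pt_d(x)}{-(d-2)/(2x+d-2)}=1+\frac{2x+d-2}{d-2}\Bigl(\frac{P_{d+1}(x)}{P_d(x)}-q_d(x)\Bigr).$$
The only arithmetic I would need about the $P_m$'s is their values at a half-integer: in $P_m(\ell)=\sum_i\frac{(\ell_i)^2(\ell_{m-i})^2}{i!(m-i)!}$ the factor $\ell_i$ vanishes once $i>\ell$, so a nonzero summand forces $m-\ell\le i\le\ell$. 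Taking $\ell=b$: $P_{2b}(b)=(b!)^2$ (only $i=b$ survives, $b_b=b!$) and $P_{2b+1}(b)=0$ (empty range). I would also record $q_{2b}(b)=\tfrac{b}{2b+1}$, i.e. $q_d(\tfrac d2)=\tfrac d{2(d+1)}$, by direct substitution.

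The even case $d=2b$, $x=\tfrac d2=b$, is then a one-liner: $P_d(b)=(b!)^2\ne0$ so no cancellation occurs, $\frac{R_d(b)}{P_d(b)}=\frac{P_{2b+1}(b)}{(b!)^2}-q_d(\tfrac d2)=-\tfrac d{2(d+1)}$, and with $2x+d-2=2d-2$ we get
$$Q_d(\tfrac d2)=1-\frac{2d-2}{d-2}\cdot\frac d{2(d+1)}=\frac{(d-2)(d+1)-d(d-1)}{(d-2)(d+1)}=\frac{-2}{(d+1)(d-2)},$$
which moreover agrees with the limit $Q_{\text{lim}}$ of (\ref{6}), a useful sanity check.

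The odd case $d=2b+1$, $x=\tfrac{d-1}2=b$, is the one demanding care: now $P_d(b)=P_{2b+1}(b)=0$ and also $P_{d+1}(b)=P_{2b+2}(b)=0$, so $R_d/P_d$ is genuinely $0/0$ at $x=b$ and must be read off as $\rt_d(b)/\pt_d(b)$ (using the same reduction, by $\prod_{k=0}^b(x-k)^2$, for both $R_{2b+1}$ and $P_{2b+1}$). My plan is to obtain $\rt_{2b+1}(b)=0$ directly from the proof of Lemma \ref{hardlem}: setting $x=0$ in (\ref{terms}) makes every product $\prod_{j=1}^{b-i}(x-j)^2\prod_{j=b-i+1}^b(x+j)^2$ and $\prod_{j=1}^b(x+j)^2$ collapse to $(b!)^2$, whence
$$\tfrac{(2b+2)!}2\rt_{2b+1}(b)=(b!)^2\Bigl(\sum_{i=0}^{b-1}\tbinom{2b+1}i(2(b-i)^2+b-2i)-b\tbinom{2b+1}b\Bigr)=0$$
by the identity recorded just after (\ref{terms}). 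To turn $\rt_d(b)=0$ into $\rt_d(b)/\pt_d(b)=0$ I then need $\pt_{2b+1}(b)\ne0$; dividing the symmetrized form of $P_{2b+1}$ by $\prod_{k=0}^b(x-k)^2$ gives $\pt_{2b+1}(x)=2\sum_{i=0}^b\frac{(x_i)^2}{i!(2b+1-i)!}\bigl(\prod_{k=b+1}^{2b-i}(x-k)\bigr)^2$, and at $x=b$ all terms are nonnegative with the $i=0$ term equal to $\tfrac{2(b!)^2}{(2b+1)!}>0$ (indeed $\pt_{2b+1}(b)=\tfrac{2^{2b+1}(b!)^2}{(2b+1)!}$). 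Since $2x+d-2=2d-3\ne0$, this yields $Q_d(\tfrac{d-1}2)=1$.

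The only step that is more than routine is this last one in the odd case — noticing that $\rt_{2b+1}(b)$, the quantity whose vanishing the proposition needs, is already computed inside the proof of Lemma \ref{hardlem} (at the shifted argument $x=0$), and checking that $\pt_{2b+1}(b)$ is honestly nonzero so that the $0/0$ form of $R_d/P_d$ at $x=\tfrac{d-1}2$ is correctly resolved to $0$. The even case is a single algebraic identity.
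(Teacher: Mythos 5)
The paper offers no proof of this proposition (it is stated ``whose easy proof we omit''), so there is no argument of the author's to compare against; judged on its own, your proof is correct and complete. The even case is exactly as routine as you say: $P_{2b}(b)=(b!)^2\ne0$, $P_{2b+1}(b)=0$, and $q_d(\tfrac d2)=\tfrac d{2(d+1)}$ give $\tfrac{R_d}{P_d}(\tfrac d2)=-\tfrac d{2(d+1)}$ and hence $Q_d(\tfrac d2)=\tfrac{-2}{(d+1)(d-2)}$. You also correctly identify the one point of substance in the odd case: $R_d/P_d$ is $0/0$ at $x=\tfrac{d-1}2$, and the definition (\ref{Q_d}) of $Q_d$ is in terms of the reduced forms $\rt_d,\pt_d$, so one must check $\pt_{2b+1}(b)\ne0$ (your value $\tfrac{2^{2b+1}(b!)^2}{(2b+1)!}$ is right) and $\rt_{2b+1}(b)=0$, the latter falling out of (\ref{terms}) at the shifted argument $x=0$ together with the identity $\sum_{i=0}^{b-1}\binom{2b+1}i(2(b-i)^2+b-2i)=b\binom{2b+1}b$ recorded in the proof of Lemma \ref{hardlem}. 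One small remark: your displayed formula $\pt_{2b+1}(x)=2\sum_{i=0}^b\frac{(x_i)^2}{i!(2b+1-i)!}\bigl(\prod_{k=b+1}^{2b-i}(x-k)\bigr)^2$ is the correct one for the normalization $\pt_{2b+1}=P_{2b+1}/\prod_{i=0}^{b}(x-i)^2$ relevant to $\eps=1$; the paper's displayed $\eps+\delta=1$ case (with the product starting at $j=0$) corresponds to the other normalization $(\eps,\delta)=(0,1)$, so it is good that you derived your formula independently rather than quoting that display.
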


\section{The general case ($s_1$ and $s_3$ not necessarily equal)}\label{genlcase}
In this section, we present our analysis of the general case, which is similar to, but not nearly so thoroughly
developed as, the case $s_1=s_3$ considered in the preceding sections.

For arbitrary $s_1$, $s_2$, and $s_3$, now let $$f_{s_1,s_2,s_3}(N):=\tbinom{s_2}N\sum_i\tbinom{s_1}i\tbinom{s_3}{N-i}\tbinom Ni.$$
{\tt Maple} suggests that for any values of the $s_i$ this $f_{s_1,s_2,s_3}$ is a unimodal function of $N$. If so, we can find the value of $N$ at which
$f$ achieves a maximum by an analysis extremely similar to that employed in the case $s_1=s_3$.

The formula for $f$ is symmetric in $s_1$ and $s_3$. We write $s_1=x$ and $s_3=x+\Delta$, $\Delta\ge0$.
Let $d=s_1+s_3-N$, and
$$P_{\Delta,d}(x)=\sum j!(d-j)!\tbinom{x+\Delta}j^2\tbinom x{d-j}^2=\sum\frac{((x+\Delta)_j)^2(x_{d-j})^2}{j!(d-j)!}.$$
Generalizing (\ref{4}), which is the case $\Delta=0$, we have
\begin{equation}\label{q2}\frac{P_{\Delta,d+1}}{P_{\Delta,d}}=\frac{2x^2-2(d-\Delta)x+\tfrac12d(d+1-2\Delta)+\Delta^2}{d+1}
+\frac{R_{\Delta,d}(x)}{P_{\Delta,d}(x)}.\end{equation}

The easy generalization of (\ref{feq}) is
\begin{eqnarray}f_{x,s_2,x+\Delta}(2x+\Delta-d)&\ge&f_{x,s_2,x+\Delta}(2x+\Delta-d-1)\nonumber\\
&\text{iff}&\nonumber\\
s_2&\ge&\tfrac{2x^2+2(\Delta+1)x+\Delta^2+\Delta}{d+1}-\tfrac{d+2}2+\tfrac{R_{\Delta,d}(x)}{P_{\Delta,d}(x)},\label{Delta}
\end{eqnarray}
where $R_{\Delta,d}(x)$ is the remainder in (\ref{q2}). If we assume this remainder is negligible, then imposing equality in
(\ref{Delta}) and recalling $s_1=x$ and $s_3=x+\Delta$ yields
$$d=\sqrt{2s_1^2+2s_1+2s_3^2+2s_3+(s_2+\tfrac12)^2}-s_2-\tfrac32$$
and \begin{equation}\label{Nform}N=s_1+s_2+s_3+\tfrac32-\sqrt{2s_1^2+2s_1+2s_3^2+2s_3+(s_2+\tfrac12)^2},\end{equation}
as nice a generalization of \ref{rfnmt2} and (\ref{1}) as one could possibly desire. This yields
$$\sqrt{2s_1^2+2s_1+2s_3^2+2s_3+(s_2+\tfrac12)^2}-\tfrac32$$
as the most likely number of elements in the union, assuming remainder terms are negligible.
More analysis of the remainder terms is required.

We have seen that when $s_1=s_3$, the remainder terms can apparently only affect the value of $N$ by 1. In Table \ref{D8} we present data when $s_3=s_1+8$,
indicating rather good agreement. Here ``actual $N$" is where the maximum actually occurs.

\begin{table}
\begin{center}
\caption{Comparison of actual $N$ and formula $N$}
\label{D8}
\begin{tabular}{rrr|cl}
$s_1$&$s_3$&$s_2$&\text{actual }$N$&\qquad(\ref{Nform})\\
\hline
                       $4$&$ 12$&$ 4$&$ 4$&$ 2.20621862$\\
                       $4$&$ 12$&$ 5$&$ 4$&$ 2.94878520$\\
                       $4$&$ 12$&$ 6$&$ 5$&$ 3.64427035$\\
                       $4$&$ 12$&$ 7$&$ 6$&$ 4.29480265$\\
                       $4$&$ 12$&$ 8$&$ 6$&$ 4.90267008$\\
                       $4$&$ 12$&$ 9$&$ 7$&$ 5.47025916$\\
                       $4$&$ 12$&$ 10$&$ 7$&$ 6.00000000$\\
                       $4$&$ 12$&$ 11$&$ 7$&$ 6.49431892$\\
                       $4$&$ 12$&$ 12$&$ 8$&$ 6.95559936$\\
                       $4$&$ 12$&$ 13$&$ 8$&$ 7.38615134$\\
                       $4$&$ 12$&$ 14$&$ 9$&$ 7.78818860$\\
                       $4$&$ 12$&$ 15$&$ 9$&$ 8.16381295$\\
                       $4$&$ 12$&$ 16$&$ 9$&$ 8.51500450$\\
                       $4$&$ 12$&$ 17$&$ 9$&$ 8.84361678$\\
                      $4$&$ 12$&$ 18$&$ 10$&$ 9.15137575$\\
                      $4$&$ 12$&$ 19$&$ 10$&$ 9.43988174$\\
                      $4$&$ 12$&$ 20$&$ 10$&$ 9.71061354$\\
                      \hline
                       $12$&$ 20$&$ 4$&$ 4$&$ 3.26186337$\\
                       $12$&$ 20$&$ 5$&$ 5$&$ 4.11613751$\\
                       $12$&$ 20$&$ 6$&$ 6$&$ 4.94207761$\\
                       $12$&$ 20$&$ 7$&$ 6$&$ 5.74010932$\\
                       $12$&$ 20$&$ 8$&$ 7$&$ 6.51071593$\\
                       $12$&$ 20$&$ 9$&$ 8$&$ 7.25443290$\\
                      $12$&$ 20$&$ 10$&$ 9$&$ 7.97184215$\\
                      $12$&$ 20$&$ 11$&$ 9$&$ 8.66356603$\\
                      $12$&$ 20$&$ 12$&$ 10$&$ 9.33026127$\\
                      $12$&$ 20$&$ 13$&$ 11$&$ 9.97261301$\\
                     $12$&$ 20$&$ 14$&$ 11$&$ 10.59132893$\\
                     $12$&$ 20$&$ 15$&$ 12$&$ 11.18713359$\\
                     $12$&$ 20$&$ 16$&$ 12$&$ 11.76076312$\\
                     $12$&$ 20$&$ 17$&$ 13$&$ 12.31296031$\\
                     $12$&$ 20$&$ 18$&$ 13$&$ 12.84446999$\\
                     $12$&$ 20$&$ 19$&$ 14$&$ 13.35603495$\\
                     $12$&$ 20$&$ 20$&$ 14$&$ 13.84839221$
\end{tabular}
\end{center}
\end{table}

\end{document}